\documentclass[amsart]{amsart}
\usepackage{latexsym}
\usepackage{amssymb}
\usepackage{amsmath}
\usepackage{amsfonts}
\usepackage{amsthm}\theoremstyle{plain}
%\usepackage{etoolbox}
%\newcommand{\zerodisplayskips}{%
%  \setlength{\abovedisplayskip}{2pt}%
%  \setlength{\belowdisplayskip}{2pt}%
%  \setlength{\abovedisplayshortskip}{2pt}%
%  \setlength{\belowdisplayshortskip}{2pt}}
%\appto{\normalsize}{\zerodisplayskips}
%\appto{\small}{\zerodisplayskips}
%\appto{\footnotesize}{\zerodisplayskips}
\newtheorem{theorem}{Theorem}[section]
\newtheorem{corollary}[theorem]{Corollary}
\newtheorem{lemma}[theorem]{Lemma}

\newtheorem{proposition}[theorem]{Proposition}
\theoremstyle{definition}
\newtheorem{definition}[theorem]{Definition}
\theoremstyle{remark}

\renewcommand{\det}{\operatorname{det}}
\newcommand{\perm}{\operatorname{perm}}
\newcommand{\mperm}{\operatorname{mperm}}
\newcommand{\hoperm}{\operatorname{hoperm}}

\newcommand{\rank}{\operatorname{rank}}

\newcommand{\dc}{\operatorname{dc}}

\newcommand{\NN}{\mathbb{N}}

\newcommand{\ee}{\mathbf{e}}
\newcommand{\mm}{\mathbf{m}}
\newcommand{\vv}{\mathbf{v}}
\newcommand{\xx}{\mathbf{x}}

\newcommand{\zerovec}{\mathbf{0}}

\title[Bounds on Determinantal Complexity]{Bounds on Determinantal Complexity of Two Types of Generalized Permanents}
\author[T. Bogart]{Tristram Bogart}
\address{
Tristram Bogart\\
Departamento de Matem\'aticas\\
Universidad de los Andes\\
Carrera $1^{\rm ra}\#18A-12$\\ 
Bogot\'a, Colombia
}
\email{tc.bogart22@uniandes.edu.co}
\author[J.A. Valero]{Juan Andr\'es Valero}
\address{
Juan Andr\'es Valero\\
Departamento de Matem\'aticas\\
Universidad de los Andes\\
Carrera $1^{\rm ra}\#18A-12$\\ 
Bogot\'a, Colombia
}
\email{ja.valeros@uniandes.edu.co}

%\author{Tristram Bogart and Juan Andr\'es Valero}
\thanks{The authors would like to thank Mauricio Velasco for suggesting the idea of multipermanents. The first-named author was supported by internal research grant INV-2020-105-2076 from the Faculty of Sciences of the Universidad de los Andes.}
%\address[A1,A2]{Departamento de Matem\'aticas \\ Universidad de los Andes \\ Bogot\'a, Colombia}
%\email[A1]{tc.bogart22@uniandes.edu.co}
%\email[A2]{ja.valeros@uniandes.edu.co}
\date{\today}

\begin{document}

\begin{abstract}
We define two new families of polynomials that generalize permanents and prove upper and lower bounds on their determinantal complexities comparable to the known bounds for permanents. One of these families is obtained by replacing permutations by signed permutations, and the other by replacing permutations by surjective functions with preimages of prescribed sizes.
\end{abstract}

\maketitle{}

\section{Introduction}
The determinantal complexity of a multivariate polynomial $f(x_1, \dots, x_n)$ defined over a field $k$ is the minimum number $m$ for which there exists an $m \times m$ matrix of affine linear functions of $x_1, \dots, x_n$ whose determinant is equal to $f$. A flagship problem in algebraic complexity is to prove or disprove \emph{Valiant's Conjecture}, which states that the determinantal complexity of the \emph{permanent} $\perm_n = \sum_{\sigma \in S_n} \prod_{i=1}^n x_{i,\sigma(i)}$ of an $n \times n$ matrix grows superpolynomially as a function of $n$. The best known upper and lower bounds for this complexity, as discussed below, are respectively exponential and quadratic. 

We will prove similar bounds on two families of generalized permaments. The first family is obtained from permanents by replacing permutations by \emph{signed permutations}: that is, permutations $\tau$ of the set $\{1, \dots, n, -1, \dots, -n\}$ such that $\tau(-i) = -\tau(i)$ for $i=1, \dots, n$. In effect, we replace the symmetric group by the hyperoctahedral group of symmetries of the $n$-cube, and thus these polynomials can be seen as a \emph{type B analogue} \cite{Reiner} of the permanents.

\begin{definition} We define the \emph{hyperoctahedral permanent} $\hoperm_n$ to be the polynomial on $2n^2$ variables $x_{i,j}: 1 \leq i \leq n, 1 \leq \pm j \leq n$ given by 
\[ \hoperm_n = \sum_{\sigma \in S_n} \sum_{(\epsilon_1,\dots,\epsilon_n) \in \{\pm 1\}^n} \prod_{i=1}^n x_{i, \epsilon_i \sigma(i)}\]
\end{definition}

The second family is obtained from permanents by generalizing permutations to functions with preimages of prescribed sizes. 

\begin{definition} Let $\gamma \in \NN$ and $\mm=(m_1,\dots,m_n)$ be a composition of $\gamma$. We define the \emph{multipermament}
  \[ \mperm_\mm = \sum_{\sigma \in \Sigma_\mm} \prod_{i=1}^n x_{i,\sigma(i)} \]
  where $\Sigma_\mm$ is the set of functions $\sigma: [\gamma] \to [n]$ such that $|\sigma^{-1}(i)| = m_i$ for $i=1,\dots,n$.
\end{definition}
We recover the ordinary permanents as $\perm_n = \mperm_{(1,\dots,1)}$. Also, note that reordering the components of $\mm$ simply results in relabeling the variables in the multipermanent. We define multipermanents via compositions rather than partitions in order to facilitate proofs by induction.

It is intuitive from the definitions of hyperoctahedral permanents and multipermanents that these polynomials should be at least as hard to calculate as permanents. We could make this intution precise by using the notion of \emph{VNP-completeness}, introduced by Valiant \cite{valiant1}. Informally, (sequences of) permanents, hyperoctahedral permanents, or multipermanents are all in the class VNP because any given coefficient is easy to determine. Valiant showed that the family of permanents is in fact VNP-complete; that is, if there is a family of polynomial-size algebraic circuits to compute permanents, then VP=VNP, and in fact P=NP \cite{valiant2}. Now $\perm_n$ can be efficiently reduced either to $\hoperm_n$ or to $\mperm_\mm$ where $\mm$ is any composition into $n$ nonzero parts, so families of hyperoctahedral permanents or of multipermanents with $n$ increasing are also VNP-complete. A recent article by Ikenmeyer and Landsberg \cite{IL} lays out precise relations between VNP-completeness, determinantal complexity, and various other measures of algebraic complexity. 

Meaningful upper and lower bounds on determinantal complexity are typically difficult to obtain. The best known upper bound on $\dc(\perm_n)$ for $n \geq 3$, obtained by Grenet \cite{Grenet}, is $2^{n} - 1$. Our new polynomials are suggested by the observation that Grenet's method of proof works for any \emph{poset polynomial} (as defined in the next section). For permanents, the upper bound is obtained from a natural labeling of the cover relations of the Boolean lattice. By using similar labelings of the face lattice of the $n$-cube, and of the lattice of multisets contained in a given multiset (equivalently, of monomials dividing a given monomial), we obtain the following upper bounds. 
\begin{theorem} \label{thm:upperbounds} $ $
  \begin{enumerate} 
  \item For every $n$, we have $\dc(\hoperm_n) \leq 3^n$.
  \item For every composition $\mm=(m_1,\dots,m_n)$, we have \\
    $\dc(\mperm_\mm) \leq \left( \prod_{i=1}^n (m_i+1) \right) - 1$. 
  \end{enumerate}
\end{theorem}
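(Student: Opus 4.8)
The plan is to realize each of the two polynomials as the \emph{poset polynomial} of an explicit graded, bounded poset $P$ and then invoke the general determinantal bound $\dc(f_P) \le |P| - 1$, the common generalization of Grenet's construction alluded to above (to be set up via poset polynomials in the next section). Recall the mechanism: orienting the Hasse diagram of $P$ upward and labeling each cover relation $u \lessdot v$ by its affine-linear form $\ell_{u,v}$ turns $P$ into an algebraic branching program with source $\hat 0$ and sink $\hat 1$ computing $f_P = \sum_{\hat 0 = c_0 \lessdot \cdots \lessdot c_r = \hat 1} \prod_t \ell_{c_{t-1},c_t}$. Identifying $\hat 0$ with $\hat 1$ leaves $|P|-1$ vertices, and in the resulting digraph every directed cycle passes through the merged vertex and follows exactly one maximal chain, so a suitable matrix of affine-linear forms of size $|P|-1$ has determinant $f_P$ (up to a global sign absorbed by negating a row) via the cycle-cover expansion. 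Everything then reduces to choosing the right poset and a \emph{valid} labeling, i.e.\ one in which each label depends only on the cover relation and not on the chain containing it.

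For the multipermanent I would take $P$ to be the lattice of sub-multisets of $\{1^{m_1},\dots,n^{m_n}\}$, equivalently the product of chains $\prod_{i=1}^n \{0,1,\dots,m_i\}$ ordered componentwise, whose elements $a=(a_1,\dots,a_n)$ number exactly $\prod_{i=1}^n (m_i+1)$. This poset is graded by $|a|=\sum_i a_i$, with $\hat 0 = \zerovec$ and $\hat 1 = \mm$. A cover relation increments a single coordinate, $a \lessdot a + \ee_j$, and I label it by $x_{t,j}$ with $t = |a|+1$ its rank; this is clearly well defined. A maximal chain records the sequence $(w_1,\dots,w_\gamma)$ of coordinates incremented, which is precisely a function $\sigma \in \Sigma_\mm$ with $\sigma(t)=w_t$, and its edge-product is $\prod_{t=1}^{\gamma} x_{t,\sigma(t)}$, the monomial of $\mperm_\mm$ associated to $\sigma$. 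Summing over maximal chains gives $f_P = \mperm_\mm$, so the bound yields $\dc(\mperm_\mm) \le \left(\prod_{i=1}^n (m_i+1)\right) - 1$.

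For the hyperoctahedral permanent I would take $P$ to be the face lattice of the $n$-cube $[-1,1]^n$, encoding faces as sign vectors in $\{-1,0,1\}^n$ (a $0$ marking a free coordinate) together with the empty face as $\hat 0$; since $\sum_{k=0}^n \binom{n}{k} 2^{n-k} = 3^n$ counts the nonempty faces, $|P| = 3^n + 1$. This lattice is graded by dimension plus one, and a maximal chain climbs $\emptyset \lessdot \vv \lessdot \cdots \lessdot [-1,1]^n$, so it is determined by a vertex $\vv \in \{\pm 1\}^n$ followed by an ordering of the coordinates in which they are successively freed, i.e.\ by a pair $(\vv,\sigma) \in \{\pm 1\}^n \times S_n$. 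I would label the bottom edge $\emptyset \lessdot \vv$ by the constant $1$, and an edge freeing coordinate $c$ (turning a fixed value $v_c = \pm 1$ into $0$) at rank $i$ by the variable $x_{i,\,v_c\,c}$; the freed coordinate, its former sign, and the rank are all read off from the cover relation alone, so the labeling is valid. Writing $\sigma(i)$ for the coordinate freed at step $i$ and $\epsilon_i = v_{\sigma(i)}$, the edge-product along the chain is $\prod_{i=1}^n x_{i,\,\epsilon_i \sigma(i)}$, the assignment $(\vv,\sigma)\mapsto(\sigma,\epsilon)$ is a bijection, and hence $f_P = \hoperm_n$, giving $\dc(\hoperm_n) \le 3^n$.

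The routine but essential obstacle is to check that these labelings are valid and that the maximal-chain sums reproduce the targets with the correct monomials and multiplicities; the multipermanent case is immediate, while the cube case needs the sign bookkeeping above and the observation that the single constant cover relation (the empty face feeding each vertex) is exactly what reconciles the $n+1$ cover relations of each maximal chain with the degree-$n$ monomials. The deeper obstacle, which I would isolate as the standalone engine, is establishing the poset-polynomial bound $\dc(f_P)\le|P|-1$ at the precise claimed size: one must verify that after merging $\hat 0$ and $\hat 1$ the only cycle covers of the weighted digraph consist of a single cycle (one maximal chain) together with fixed points, and that the determinant expansion produces a uniform sign on these terms, so that no extraneous monomials survive and no factor of the size is lost.
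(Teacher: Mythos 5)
Your proposal is correct and follows essentially the same route as the paper: both realize $\mperm_\mm$ as the poset polynomial of the lattice of sub-multisets of $\{1^{m_1},\dots,n^{m_n}\}$ and $\hoperm_n$ as that of the face lattice of the $n$-cube, and then invoke the Grenet-style cycle-cover bound $\dc(f) \leq |P| - 1$ (Proposition \ref{prop:poset_bound}). If anything, your treatment of the cube is more careful than the paper's: by adjoining the empty face you obtain a bounded poset with $3^n + 1$ elements and unique $\hat{0}$ and $\hat{1}$ (the constant label $1$ on the bottom cover relations reconciling the chain length $n+1$ with the degree-$n$ monomials), whereas the paper works with the $3^n$ nonempty faces, a poset with $2^n$ maximal elements that does not literally satisfy the unique-maximum hypothesis of Proposition \ref{prop:poset_bound}.
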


For $n \geq 5$, the best known lower bound on $\dc(\perm_n)$ in characteristic zero, obtained by Mignon and Ressayre \cite{MR}, is $\frac{n^2}{2}$. Their proof involves bounding the rank of the Hessian matrix at a given zero of the permanent. Cai, Chen, and Li \cite{CCL} later adapted this technique to give a quadratic lower bound for $\dc(\perm_n)$ in characteristic $p > 2$. The technique can also be applied to other polynomials. For example,  Chen, Kayal, and Wigderson suggest as an exercise \cite[Exercise 13.6]{CKW} to show in this way that the determinantal complexity of the symmetric polynomial $\sum_{1 \leq i \neq j \leq n}x_ix_j$ is $\Omega(n)$. In a more involved application, Chillara and Mukhopadhyay \cite{CM} showed that the determinantal complexity of the \emph{iterated matrix multiplication polynomial}
\[IMN_{n,d}(X)=\sum\limits_{i_1,\ldots,i_{d-1}\in [n]} x_{1,i_1}^{(1)}x_{i_1,i_2}^{(2)}\cdots x_{i_{d-2},i_{d-1}}^{(d-1)}x_{i_{d-1},1}^{(d)}\]
(that is, the top left entry of the product of $d$ generic $n \times n$ matrices) is at least $d(n-1)$. 

Unlike the Grenet argument for upper bounds, each application of the Mignon-Ressayre method to a new family of polynomials seems to require significant work: first in identifying an appropriate zero of the polynomial (not all zeros yield a useful bound), then in analyzing the Hessian at this zero. In this way, we prove the following results. 

\begin{theorem} \label{thm:lowerbounds} Let $n \geq 3$ and $k$ be a field of characteristic zero.
  \begin{enumerate}
  \item For all $n$ we have $dc(\hoperm_n)\geq n^2/2.$
  \item For all $\mm \in \NN^n$ we have $\dc(\mperm_\mm) \geq (m_1 + \dots + m_n)^2 / 2$. 
    \end{enumerate}
\end{theorem}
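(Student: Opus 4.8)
The plan is to apply the Hessian method of Mignon and Ressayre \cite{MR}, working throughout over the characteristic-zero field $k$. Its key estimate, which I would first isolate as a lemma, is the following: if a polynomial $f$ equals $\det(A)$ for an $m\times m$ matrix $A$ of affine linear forms, then at every zero $a$ of $f$ one has $\rank\bigl(\operatorname{Hess}_f(a)\bigr)\le 2m$. This holds because $A$ is affine, so its entrywise differential is a constant linear map $L$ and $\operatorname{Hess}_f(a)=L^{\mathsf T}\operatorname{Hess}_{\det}\!\bigl(A(a)\bigr)L$; since $A(a)$ is a singular matrix, the Hessian of $\det$ at $A(a)$ has rank at most $2m$ (the extreme case being corank one), and rank cannot increase under the pullback by $L$. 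Rearranging gives $\dc(f)\ge \tfrac12\rank\bigl(\operatorname{Hess}_f(a)\bigr)$ for any zero $a$. So both parts reduce to producing one explicit zero at which the Hessian has rank $n^2$, respectively $(m_1+\dots+m_n)^2$.

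For $\hoperm_n$ I would begin by recording that, since each row index contributes exactly one factor to every monomial, $\partial^2\hoperm_n/\partial x_{i,a}\partial x_{k,b}$ vanishes when $i=k$ or when $|a|=|b|$, and otherwise equals the hyperoctahedral permanent of the $(n-2)\times(n-2)$ minor obtained by deleting rows $i,k$ and the value-labels $\pm|a|,\pm|b|$. The crucial observation is that this complementary $\hoperm$ depends on $a$ and $b$ only through $|a|$ and $|b|$. Hence, evaluating at a zero $a$ that is invariant under the sign involution $x_{i,j}\leftrightarrow x_{i,-j}$, the rows of the Hessian indexed by $(i,a)$ and $(i,-a)$ coincide, so $\rank\operatorname{Hess}_{\hoperm_n}(a)\le n^2$ automatically. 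The work is then to choose the sign-symmetric zero so that the induced $n^2\times n^2$ matrix, indexed by pairs $(i,|a|)$ with entries given by the complementary sub-$\hoperm$s, is nonsingular; a convenient candidate is the analogue of the Mignon--Ressayre point for the permanent, with its entries symmetrized over signs.

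For $\mperm_\mm$ I would exploit the recursive structure of compositions, which is precisely why the family was indexed by $\mm$. Here $\partial^2\mperm_\mm/\partial x_{j,i}\partial x_{k,l}$ (for $j\ne k$) is a sub-multipermanent $\mperm_{\mm-\ee_i-\ee_l}$ on the remaining rows, so the Hessian is assembled from multipermanents for the decremented compositions. The strategy is to build the candidate zero and prove the rank bound by induction, peeling off one part of $\mm$ at a time and reducing to the permanent base case $\mm=(1,\dots,1)$, where the required nonsingularity is exactly the Mignon--Ressayre computation. Concretely, I would seek a zero making the Hessian block-triangular with respect to the grading by the parts $m_i$, so that its rank decomposes as a sum of ranks coming from the smaller compositions, and then track that count up to $(m_1+\dots+m_n)^2$.

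In both parts the single hard step is the one highlighted in the introduction: verifying that the proposed zero is \emph{good}, i.e. that the numerical matrix of complementary sub-permanents is actually invertible (resp. has the asserted rank). Guessing the point is easy by analogy with the permanent, but a careless choice collapses the rank, so the argument hinges on an explicit evaluation—most plausibly via a Schur-complement or block-$LU$ factorization of the Hessian—together with careful bookkeeping of the signs $\epsilon_i$ in the hyperoctahedral case and of the multiplicities $m_i$ in the multipermanent case.
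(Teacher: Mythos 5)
Your framework---the Mignon--Ressayre estimate $\dc(f)\ge\tfrac12\rank(T^2_a f)$ at any zero $a$ of $f$---is exactly the paper's Corollary \ref{cor:Hessian_dc_bound}, and your plan for $\hoperm_n$ follows the paper's proof in all essentials: the second partials $\partial^2\hoperm_n/\partial x_{i,a}\partial x_{k,b}$ vanish when $i=k$ or $|a|=|b|$ and otherwise depend only on $(i,k,|a|,|b|)$, so the $2n^2\times 2n^2$ Hessian has the form $\left(\begin{smallmatrix}C&C\\ C&C\end{smallmatrix}\right)$ with $C$ of size $n^2\times n^2$, hence rank at most $n^2$ (note this holds at \emph{every} point, not just sign-symmetric ones, since the second-derivative polynomials themselves coincide). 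Moreover the zero the paper uses (all ones except $b_{n,n}=1-2n$, Proposition \ref{prop:hoperm_zero1}) has the same Hessian as your sign-symmetrized Mignon--Ressayre point, because $\hoperm$ only sees the column-pair sums $b_{i,j}+b_{i,-j}$. What your outline omits is the actual substance of the paper's Section 3.1: computing $C$ explicitly (Proposition \ref{prop:hessianhoperm}) and proving it nonsingular via the block-matrix lemma (Lemma \ref{lem:off_diagonal}). Flagging this as ``the hard step'' is fair, but without it you have a strategy for part (1), not a proof.

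The genuine gap is in part (2). You reduce it to ``producing a zero at which the Hessian has rank $(m_1+\cdots+m_n)^2$''; writing $\gamma=m_1+\cdots+m_n$, this is impossible whenever some $m_i\ge 2$: $\mperm_\mm$ is a polynomial in only $\gamma n$ variables (a $\gamma\times n$ matrix), so its Hessian is $\gamma n\times\gamma n$ and has rank at most $\gamma n<\gamma^2$. The Hessian method can therefore never give more than $\gamma n/2$ here. In fact the bound $\gamma^2/2$ is not merely out of reach of this method, it is false as written: for $\mm=(m,1,1)$, Theorem \ref{thm:upperbounds}(2) gives $\dc(\mperm_\mm)\le 4m+3$, which is smaller than $(m+2)^2/2$ once $m\ge 5$; the statement must be read with $\gamma n/2$ in place of $\gamma^2/2$, and $\gamma n/2$ is what the paper's argument actually establishes. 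The paper's route is also different from yours: there is no induction or block-triangularization by peeling off parts of $\mm$. Instead it exhibits an explicit zero (two cases, Proposition \ref{prop:mperm_zero1}, according to whether all $m_i$ are equal), computes the full $\gamma n\times\gamma n$ Hessian there in closed form (Proposition \ref{prop:hessianperm1}), and shows it is nonsingular---hence of full rank $\gamma n$---by proving the $n\times n$ blocks $Q^{(\ell)},R^{(\ell)}$ invertible and applying Lemma \ref{lem:off_diagonal}. Your inductive plan, besides being unexecuted, is aimed at an unattainable rank, so it cannot be repaired without first correcting the target it is trying to hit.
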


We note that these upper and lower bounds are exponentially far apart, just as in the case of the permanent. It would be interesting to identify new families of polynomials based on combinatorially natural posets for which the approach described here not only succeeds in proving both lower and upper bounds, but for which the upper and lower bounds are significantly closer together.   

The rest of the paper is organized as follows. In Section 2, we explain Grenet's method as applied to general labeled graded posets and apply it to prove Theorem \ref{thm:upperbounds}.
%We also observe that the sequence of hyperoctahedral permanents and any growing sequence of multipermanents is VNP-complete.
In Section 3, we explain the Mignon-Ressayre method and apply it to prove Theorem \ref{thm:lowerbounds}.

\section{Proofs of upper bounds and VNP-completeness}
Let $P$ be a graded poset of rank $d$ with $p$ elements, including a unique maximum $\hat{1}$ and a unique minimum $\hat{0}$. We say that a polynomial $f = f(x_1, \dots, x_n)$ is \emph{supported on $P$} if there exists a labelling of the edges $e$ of the Hasse diagram of $P$ with linear forms $L_e$ in the variables $x_1, \dots, x_n$ such that
\[ f = \sum_{C} \left( \prod_{e \in C} L_e \right) \]
where $C$ ranges over all of the saturated chains $\hat{0} < i_1 < i_2 < \dots < i_{d-1} < \hat{1}$ in $P$. 

\begin{proposition} \label{prop:poset_bound} If $f$ is supported on a graded poset $P$ with $p$ elements and a unique minimum $\hat{0}$ and maximum $\hat{1}$, then $f$ has a determinantal representation of size $p-1$.
\end{proposition}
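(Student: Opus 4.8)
The plan is to realize $f$ as a single cofactor of the matrix $I - N$, where $N$ is the weighted adjacency matrix of the Hasse diagram of $P$ oriented from $\hat{0}$ toward $\hat{1}$. First I would index the rows and columns by the $p$ elements of $P$ and set $N_{uv} = L_e$ when $e$ is the edge of a covering relation $u \lessdot v$, and $N_{uv} = 0$ otherwise. Because the Hasse diagram of a poset is a directed acyclic graph, $N$ is nilpotent, so $I - N$ is invertible with $\det(I - N) = 1$ and
\[ (I - N)^{-1} = \sum_{k \geq 0} N^k. \]

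The key observation is that the $(\hat{0}, \hat{1})$ entry of $N^k$ is the sum, over all directed paths of length $k$ from $\hat{0}$ to $\hat{1}$ in the Hasse diagram, of the product of the edge labels along the path. Such directed paths are exactly the saturated chains from $\hat{0}$ to $\hat{1}$, and since $P$ is graded of rank $d$ they all have length $d$. Hence $(N^k)_{\hat{0}, \hat{1}} = 0$ for $k \neq d$, so that $(I - N)^{-1}_{\hat{0}, \hat{1}} = (N^d)_{\hat{0}, \hat{1}} = f$ by the definition of $f$ being supported on $P$.

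Now I would apply Cramer's rule. Since $\det(I - N) = 1$, the entry $(I - N)^{-1}_{\hat{0}, \hat{1}}$ equals the corresponding entry of the adjugate, namely $\pm 1$ times the minor of $I - N$ obtained by deleting the row indexed by $\hat{1}$ and the column indexed by $\hat{0}$. This minor is the determinant of a $(p-1) \times (p-1)$ matrix whose entries are $1$ on the diagonal and either $-L_e$ or $0$ off the diagonal, hence affine linear forms in the variables. Thus $\det(M) = \pm f$ for this matrix $M$ of size $p - 1$.

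The only remaining issue is the sign produced by the cofactor expansion. If it is negative, I would multiply a single row of $M$ by $-1$; this flips the determinant to $+f$ while keeping every entry affine linear, completing the construction. I expect this sign bookkeeping to be the only genuinely fiddly point; the substantive content, that grading forces exactly the length-$d$ paths to survive and that nilpotency makes the denominator in Cramer's rule trivial, follows directly from the path interpretation of the powers of $N$.
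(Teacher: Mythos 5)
Your proof is correct, but it takes a genuinely different route from the paper's. The paper follows Grenet's cycle-cover argument: it glues $\hat{0}$ and $\hat{1}$ into a single vertex $v_0$, attaches a loop labelled $1$ at every other vertex, and observes that each weighted cycle cover of the resulting digraph consists of one cycle through $v_0$ (a saturated chain) plus loops; gradedness forces all these cycles to have the same length, hence all cycle covers the same sign, so $\det(A) = \pm\perm(A) = \pm f$ for the $(p-1)\times(p-1)$ adjacency matrix $A$. Your argument is instead the classical branching-program-to-determinant construction: $N$ nilpotent gives $\det(I-N)=1$ and $(I-N)^{-1}_{\hat{0},\hat{1}} = \sum_k (N^k)_{\hat{0},\hat{1}} = f$, and Cramer's rule identifies this entry with $\pm$ a $(p-1)\times(p-1)$ minor of $I-N$. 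One thing your approach buys is that gradedness is not actually needed: the sum over all $k$ collects saturated chains of every length with a single global sign (coming from the positions of the deleted row and column), whereas the paper's sign argument genuinely requires all chains to have the same length. The paper's approach, in exchange, stays closer to the permanent-versus-determinant combinatorics that motivates the whole construction. Two small imprecisions in your write-up, neither fatal: after deleting the row indexed by $\hat{1}$ and the column indexed by $\hat{0}$, the diagonal of the minor is no longer the diagonal of $I-N$ (under a linear extension it is the superdiagonal), so your claim that the minor has $1$'s on its diagonal is not literally right --- but all you need is that every entry is $0$, $1$, or $-L_e$, hence affine linear; and the entries $L_e$ need only be linear forms for the argument to go through, which they are by hypothesis.
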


  \begin{proof}
    The construction of polynomials supported on a poset directly generalizes Grenet's representation of the permanent polynomial \cite{Grenet} and the proof will do so as well. Recall that a \emph{cycle cover} in a directed graph $G$ is a union of vertex-disjoint cycles that passes through every vertex of $G$, and that the permanent of the directed adjacency matrix equals the number of cycle covers of $G$. If we replace the 1's in the adjacency matrix by arbitrary weights, then the permanent computes the number of weighted cycle covers.

    To obtain our graph $G$, we begin with the Hasse diagram of $P$. That is, each cover relation $x \lessdot y$ (i.e. edge of the Hasse diagram) defines an edge $e = x \to y$ in $G$ which we label by $L_e$. Now identify the maximum of $P$ with the minimum as a single vertex $v_0$. Finally, we add a loop at every vertex except $v_0$ and label these loops by 1.

Now let $A$ be the weighted adjacency matrix of $G$ with weights given by the labeling $L_e$. Every cycle cover passes through $v_0$ exactly once, so it includes one cycle of the form
    $(v_0, v_{i_1}, v_{i_2}, \dots, v_{i_{d-1}}, v_0)$ where $C = \hat{0} \lessdot i_1 \lessdot i_2 \lessdot \dots \lessdot i_{d-1} \lessdot \hat{1}$ is a saturated chain in $P$. To avoid passing through $v_0$ again, we must complete the cycle cover by taking the loop at each vertex outside $C$. Thus
\[ \perm(A) = \sum_{C} \left( \prod_{e \in C} L_e \right) 1^{p-1-d} = f . \]

    But since $P$ is graded, all of the cycles have the same length $d$, so all of the cycle covers have the same sign (positive if $d$ is odd, or negative if $d$ is even.) Thus   
    \[ \det(A) = \pm \perm(A) = \pm f. \]

    Finally, if this process yields $-f$ instead of $f$, we obtain a determinantal representation of $f$ by multiplying any single row of $A$ by -1.
  \end{proof}

  Grenet's determinantal representation of $perm_n$  of size $2^n -1$ is obtained by applying this process to the Boolean lattice on $n$ elements. If $T$ is a set of size $i$ and $e$ is the edge from the set $T$ to the set $T \cup \{j\}$, then we take $L_e = x_{ij}$.  

  \begin{proof}[Proof of Theorem \ref{thm:upperbounds}] For the multipermanent $\mperm_{\mm}$, let $P_\mm$ be the poset of multisets contained in the multiset $S = \{1^{m_1},\dots,n^{m_n}\}$. Then $P_\mm$ is a graded poset of rank $\gamma = m_1 + \dots + m_n$ with $\prod_{i=1}^n (m_i+1)$ elements, including a unique maximum $\emptyset$ and a unique maximum $S$. Just as in the case of permanents, if $T$ is a multiset of size $i$ and if there is an edge from $T$ to $T \cup \{j\}$, then we label this edge by $x_{ij}$. Each term of the multipermanent indexes a unique saturated chain for $\emptyset$ to $S$, so the result follows from Proposition \ref{prop:poset_bound}.  

    For the hyperoctahedral permanent $\hoperm_n$, let $Q_n$ be the face lattice of the $n$-cube. A face $F$ of the cube determines a unique vector $u=(u_1,\dots,u_n) \in \{0,1,-1\}^n$ where $u_i = 1$ if $F$ is contained in the hyperplane $y_i=1$, $u_i = -1$ if $F$ is contained in the hyperplane $y_1 = -1$, and otherwise $u_i=0$. Conversely, every $u \in \{0,1,-1\}^n$ determines a unique face of the cube, so $\tilde{Q}$ is a graded poset of rank $n$ with $3^n$ elements including a unique minimum $\emptyset$ and $2^n$ maxima indexed by vectors $u \in \{1,-1\}^n$. The covering relation is given by $u \lessdot v$ when:
     \begin{enumerate}
     \item there is a unique index $j$ such that $u_j = 0$ and $v_j \neq 0$, and
     \item $u_k = v_k$ for all $k \neq j$.
     \end{enumerate}  
     We label such a relation (i.e. edge in the Hasse diagram) by $x_{i, \pm j}$ where $i$ is the rank (= number of nonzero coordinates) of $u$ and the sign depends on $v_j$. The result then follows from the definition of hyperoctahedral permanents and Proposition \ref{prop:poset_bound}. 
  \end{proof}

\section{Proofs of lower bounds}
Let $k$ be a field of characteristic zero. For a polynomial $f \in k[x_1, \dots, x_n]$, we consider the gradient function 
\begin{align*}
    Tf: k^N & \longrightarrow k^N \\
    x& \longmapsto T_xf=\left(\frac{\partial f}{\partial x_1}(x),\ldots,\frac{\partial f}{\partial x_N}(x)\right)
\end{align*}
and the Hessian 
\begin{align*}
    T^2f:k^N & \longrightarrow k^{N \times N}  \\
    x& \longmapsto T^2_xf=\left(\frac{\partial^2 f}{\partial x_i\partial x_j}(x)\right)_{1\leq i,j \leq N}.
\end{align*}

\begin{lemma} \cite[Proposition 3.8]{MR} \label{lem:Hessian} 
  For any singular $n \times n$ matrix $A$, we have \\
  $\rank (T^2_A  {\det}_{n})\leq 2n.$
\end{lemma}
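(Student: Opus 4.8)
The plan is to exploit the $GL_n \times GL_n$ symmetry of the determinant to reduce $A$ to a single diagonal representative, and then read off the rank of the Hessian from its nonzero pattern. Here $N = n^2$, and I first record the entries of $T^2_X \det_n$. Since $\det_n$ is multilinear in the rows, $\partial^2 \det_n/\partial x_{ij}^2 = 0$, and more generally the entry $\partial^2 \det_n/\partial x_{ij}\,\partial x_{kl}$ vanishes unless $i \neq k$ and $j \neq l$; in that case it equals, up to sign, the complementary $(n-2)\times(n-2)$ minor of $X$ obtained by deleting rows $i,k$ and columns $j,l$. Thus the Hessian is the matrix of signed \emph{second cofactors} of $A$.

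Next I would show that the Hessian rank is an invariant of the $GL_n\times GL_n$ orbit of $A$. For invertible $P,Q$ the map $\phi(X)=PXQ$ is a linear automorphism of $k^N$ satisfying $\det_n\circ\,\phi = \det(P)\det(Q)\,\det_n$. Writing $L$ for the invertible matrix representing $\phi$, the chain rule gives $T^2_A(\det_n\circ\,\phi) = L^{\mathsf T}\,(T^2_{PAQ}\det_n)\,L$, while $\det_n\circ\,\phi = c\,\det_n$ with $c = \det(P)\det(Q)\neq 0$ gives $T^2_A(\det_n\circ\,\phi) = c\,T^2_A\det_n$. Comparing ranks, and using that conjugation by an invertible matrix and scaling by a nonzero constant both preserve rank, yields $\rank(T^2_{PAQ}\det_n) = \rank(T^2_A \det_n)$. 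Since $A$ is singular it is equivalent under this action to $D_r = \operatorname{diag}(1,\dots,1,0,\dots,0)$ with exactly $r = \rank(A) \le n-1$ ones, so it suffices to bound $\rank(T^2_{D_r}\det_n)$.

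The core computation is the nonzero pattern at $D_r$. For a diagonal matrix, the minor deleting rows $i,k$ and columns $j,l$ is nonzero only when the surviving rows and columns index the same set, i.e. $\{i,k\}=\{j,l\}$, and when all $n-r$ zero positions lie in the deleted set $\{i,k\}$. For $r=n-1$ this forces $n\in\{i,k\}=\{j,l\}$, and splitting into the two ways of matching the sets produces exactly two kinds of nonzero couplings: the diagonal variables $x_{ii}$ all couple to $x_{nn}$, giving a \emph{star} of rank $2$, and each transpose pair $x_{nj},x_{jn}$ couples to itself, giving $n-1$ disjoint $2\times 2$ blocks of rank $2$. As these variable sets are disjoint, the ranks add to $2 + 2(n-1) = 2n$. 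For $r \le n-2$ the same analysis forces $\{i,k\}$ to contain two zero positions, leaving strictly fewer couplings (and none once $r \le n-3$), so the rank only decreases. Hence $\rank(T^2_A\det_n) \le 2n$ for every singular $A$.

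I expect the main obstacle to be the bookkeeping in the last step: correctly enumerating which pairs $((i,j),(k,l))$ give nonzero second cofactors at $D_r$, and verifying that the resulting sparse matrix genuinely decomposes into the claimed rank-$2$ pieces with no hidden rank interaction between the star block and the transpose blocks. By contrast, the identification of the Hessian entries as minors and the $GL_n\times GL_n$ invariance reduction are comparatively routine.
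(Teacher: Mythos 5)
Your proposal is correct, but note that the paper itself never proves this lemma: it is imported as a black box, with a citation to Mignon and Ressayre \cite[Proposition 3.8]{MR}, and the authors' own work begins only with the reduction in Proposition \ref{prop:Hessian} (transporting the Hessian bound from $\det_n$ to any polynomial with a determinantal representation). So there is no in-paper argument to compare against; what you have done is reconstruct, essentially faithfully, the original Mignon--Ressayre proof. Your three steps are all sound: the identification of the Hessian entries of $\det_n$ as signed complementary $(n-2)\times(n-2)$ minors, vanishing when $i=k$ or $j=l$; the $GL_n\times GL_n$ invariance of the Hessian rank, which follows correctly from $T^2_A(\det_n\circ\phi)=L^{\mathsf T}\left(T^2_{PAQ}\det_n\right)L$ together with $\det_n\circ\phi=c\det_n$, $c\neq 0$; and the reduction to the rank normal form $D_r$ with $r\leq n-1$. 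The bookkeeping at $D_{n-1}$ that you flagged as the main risk does check out: a second cofactor survives only when $\{i,k\}=\{j,l\}\ni n$, which splits into the diagonal star (couplings of $x_{ii}$, $i<n$, with $x_{nn}$, rank $2$) and the $n-1$ transpose pairs $(x_{nj},x_{jn})$ (rank $2$ each); there is no interaction between these variable sets because a coupling of a diagonal variable $x_{ii}$ with an off-diagonal $x_{kl}$ would force $\{i,k\}=\{i,l\}$, hence $k=l$, a contradiction, and couplings between distinct transpose pairs fail the set-equality condition. Hence the rank is exactly $2n$ at $D_{n-1}$ and only smaller for $r\leq n-2$, giving the bound. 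Your argument is a legitimate, self-contained replacement for the citation.
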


The following result is also based on \cite{MR}, but appears there only for permanents. 

\begin{proposition} \label{prop:Hessian}
Given $f\in k[x_1,\ldots,x_m]$, let $F:k^m\rightarrow M_n(k)$ be a determinantal repesentation of $f$; that is, $f$ is an affine linear function such that $f=\det_n\circ F$. If $f(y)=0$ then
$\rank (T^2_y f) \leq 2n.$ 
\end{proposition}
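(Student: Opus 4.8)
The plan is to reduce the statement directly to Lemma \ref{lem:Hessian} by a chain-rule computation, using crucially that $F$ is affine linear. First I would write $F$ in coordinates as $F(x) = (F_{ij}(x))_{1 \le i,j \le n}$, where each entry $F_{ij} \colon k^m \to k$ is an affine linear function of $x_1, \dots, x_m$. Consequently every first partial $\partial F_{ij} / \partial x_k$ is a constant, which I will denote $a^{(k)}_{ij}$, and every second partial $\partial^2 F_{ij} / \partial x_k \partial x_l$ vanishes. Viewing $\det_n$ as a polynomial on $M_n(k) \cong k^{n^2}$ with coordinates $z_{ij}$, the composition $f = \det_n \circ F$ is then set up for differentiation.

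Differentiating twice and applying the chain rule, I would obtain
\[
\frac{\partial^2 f}{\partial x_k \partial x_l}(y) = \sum_{i,j,p,q} \frac{\partial^2 \det_n}{\partial z_{ij}\, \partial z_{pq}}\bigl(F(y)\bigr)\, a^{(k)}_{ij}\, a^{(l)}_{pq},
\]
where the contribution of the first derivatives of $\det_n$ paired against $\partial^2 F_{ij}/\partial x_k \partial x_l$ drops out precisely because $F$ is affine. Collecting the constants $a^{(k)}_{ij}$ into the Jacobian matrix $J = dF$, of size $n^2 \times m$, this identity says exactly that
\[
T^2_y f = J^{\top}\, \bigl(T^2_{F(y)} \det_n\bigr)\, J.
\]

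The rank bound is then immediate. Since $\rank(J^{\top} H J) \le \rank(H)$ for any matrix $H$, the factorization gives $\rank(T^2_y f) \le \rank(T^2_{F(y)} \det_n)$. The hypothesis $f(y) = 0$ means $\det_n(F(y)) = 0$, so $F(y)$ is a singular $n \times n$ matrix, and Lemma \ref{lem:Hessian} yields $\rank(T^2_{F(y)} \det_n) \le 2n$. Chaining the two inequalities gives $\rank(T^2_y f) \le 2n$, as desired.

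I do not expect a genuine obstacle here; the main point requiring care is the chain-rule bookkeeping, and in particular verifying that the second-order term coming from $F$ itself vanishes so that the Hessian factors cleanly as $J^{\top} H J$. This is exactly where the affine linearity of $F$ is used: for a general nonlinear $F$ the extra summand $\sum_{i,j} (\partial \det_n / \partial z_{ij})(F(y)) \cdot \partial^2 F_{ij}/\partial x_k \partial x_l$ would survive, obstructing both the factorization and the clean rank bound.
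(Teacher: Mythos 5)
Your proposal is correct and follows essentially the same route as the paper: a two-fold chain-rule computation exploiting that the entries of $F$ are affine (so second derivatives of $F$ vanish), yielding the factorization $T^2_y f = L\, T^2_{F(y)}\det_n\, L^{t}$ (your $J^{\top} H J$, up to transpose convention), followed by the rank inequality and Lemma \ref{lem:Hessian} applied to the singular matrix $F(y)$. No gaps; the bookkeeping you flag as the main point of care is exactly what the paper's proof carries out.
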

\begin{proof}
Let $F(x)=(g_{ij}(x))_{i,j\in [n]}$. Since $F$ is affine linear, all of its partial derivatives are constant so write $c_{ij,h} = \frac{\partial g_{ij}}{\partial x_h}$. By two applications of the chain rule, we obtain
\[ \frac{\partial f}{\partial x_h}(y) = \sum\limits_{i,j\in [n]}c_{ij,h}\frac{\partial {\det}_{n}}{\partial z_{i,j}}(F(y)), \]
\[ \frac{\partial^2 f}{\partial x_h\partial x_e}(y) = \sum\limits_{i,j\in [n]}\sum\limits_{k,l\in [n]}c_{ij,h}\frac{\partial^2 {\det}_{n}}{\partial z_{k,l}\partial z_{i,j}}(F(y)) c_{kl,e}.\]
Thus $T^2_y f=L T^2_{F(y)}{\det}_{n} L^t$ where 
\[L=\left(\begin{array}{cccc}
    c_{11,1} & c_{12,1} & \cdots & c_{nn,1} \\
    \vdots & \vdots & \ddots & \vdots \\
    c_{11,m} & c_{12,m} & \cdots & c_{nn,m}
\end{array}\right)_{m\times n^2}.\]
Thus $\rank (T^2_y f)= \rank \left( L T^2_{F(y)}{\det}_{n} L^t\right) \leq \rank (T^2_{F(y)}{\det}_{n})\leq 2n$, where the last inequality follows from Lemma \ref{lem:Hessian} and the fact that ${\det}_{n}(F(y))=f(y)=0.$
\end{proof}

The following general bound on determinantal complexity is an immediate corollary of Proposition \ref{prop:Hessian}.
\begin{corollary} \label{cor:Hessian_dc_bound}
Let $f\in k[x_1,\ldots,x_m]$. For each $y\in k^m$ such that $f(y)=0$, 
\[\rank(T^2_y f)/2\leq \dc(f).\]
\end{corollary}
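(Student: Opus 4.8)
The plan is to unwind the definition of determinantal complexity and feed it directly into Proposition \ref{prop:Hessian}. Recall that $\dc(f)$ is defined as the minimum size $n$ of a determinantal representation of $f$. So I would begin by setting $n = \dc(f)$ and invoking the existence part of this definition: there is an $n \times n$ matrix $F : k^m \to M_n(k)$ of affine linear functions with $f = \det_n \circ F$, where $n = \dc(f)$ exactly.

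Next, I would apply Proposition \ref{prop:Hessian} to this particular representation $F$. The hypothesis of the proposition requires a point where $f$ vanishes, and this is precisely the point $y$ fixed in the statement, since $f(y) = 0$ by assumption. The conclusion of the proposition then gives $\rank(T^2_y f) \leq 2n$.

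Finally, I would substitute $n = \dc(f)$ to obtain $\rank(T^2_y f) \leq 2\,\dc(f)$ and divide both sides by $2$, yielding the claimed inequality $\rank(T^2_y f)/2 \leq \dc(f)$. Since the bound holds for the optimal representation realizing $\dc(f)$, no infimum over representations is needed.

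I do not expect any genuine obstacle here: all the analytic content (the chain-rule computation and the appeal to the Mignon--Ressayre Hessian rank bound of Lemma \ref{lem:Hessian}) has already been absorbed into Proposition \ref{prop:Hessian}, so this step is purely a matter of reading off the definition of $\dc(f)$ and specializing. The only point worth stating carefully is that the minimality in the definition of $\dc(f)$ guarantees a representation of size exactly $n = \dc(f)$ to which the proposition applies; this is what makes the corollary immediate rather than merely giving an inequality for some unspecified representation size.
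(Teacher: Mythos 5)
Your proposal is correct and is exactly the argument the paper intends: the paper states this corollary as an immediate consequence of Proposition \ref{prop:Hessian}, and your unwinding (take an optimal representation of size $n = \dc(f)$, apply the proposition at the zero $y$, divide by $2$) is precisely that implicit proof. Your closing remark about minimality is the right point to make explicit; nothing is missing.
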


For our lower bounds, we will need the following fact from linear algebra.
\begin{lemma} \cite[Lemma 3.7]{MR} \label{lem:off_diagonal}
  Let $a, b \in \NN$ and let $Q,R$ be $a \times a$ invertible matrices. Then the $(ab)\times(ab)$ matrix
  $M=\left(\begin{array}{ccccc}
     0 & Q & Q & \cdots & Q \\
     Q & 0 & R & \cdots & R \\
     Q & R & 0 & \cdots & R \\
     \vdots & \vdots & \vdots & \ddots & \vdots \\
     Q & R & R & \cdots & 0
\end{array} \right)$
  is invertible.
\end{lemma}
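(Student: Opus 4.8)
The plan is to prove that $M$ is invertible by showing directly that its kernel is trivial, which is the cleanest route since the presence of a zero block in the top-left corner makes a Schur-complement computation awkward. I write a vector in $k^{ab}$ in block form as $\vv = (\vv_0, \vv_1, \dots, \vv_{b-1})$ with each $\vv_i \in k^a$, matching the $b \times b$ block structure of $M$, and I suppose $M\vv = \zerovec$. Reading off the block rows will produce two distinct types of scalar-block equations, and the first of these will feed into a simplification of the others.

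Reading the top block row gives $Q(\vv_1 + \cdots + \vv_{b-1}) = \zerovec$; since $Q$ is invertible this forces the partial sum $S := \sum_{j=1}^{b-1}\vv_j$ to vanish. For each $i \geq 1$, the $i$-th block row has $Q$ in column $0$, a zero block in column $i$, and $R$ in every other column, so it reads $Q\vv_0 + R\!\left(S - \vv_i\right) = \zerovec$. Substituting $S = \zerovec$ and using that $R$ is invertible, each of these equations collapses to $\vv_i = R^{-1}Q\vv_0$; in particular all of $\vv_1, \dots, \vv_{b-1}$ are equal.

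Finally I feed this back into the relation $S = \zerovec$: summing $\vv_j = R^{-1}Q\vv_0$ over $j = 1, \dots, b-1$ gives $(b-1)\,R^{-1}Q\vv_0 = \zerovec$. Here is the one point that genuinely uses the hypotheses beyond invertibility of $Q,R$: because $k$ has characteristic zero (and $b \geq 2$, the only case in which the statement can hold, since for $b=1$ the matrix $M$ is a single zero block), the integer $b-1$ is nonzero in $k$, so $R^{-1}Q\vv_0 = \zerovec$. Invertibility of $Q$ and $R$ then yields $\vv_0 = \zerovec$ and hence $\vv_i = \zerovec$ for every $i$, so $\ker M = 0$ and $M$ is invertible. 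I do not anticipate any real obstacle here: the bookkeeping of block indices is the only thing requiring care, and the substantive step is simply recognizing that the characteristic-zero hypothesis is exactly what prevents the $b-1$ copies of the common off-diagonal vector from cancelling.
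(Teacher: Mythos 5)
Your proof is correct, but there is nothing in the paper to compare it against: the paper gives no proof of this lemma at all---it is imported verbatim as \cite[Lemma 3.7]{MR}---so your argument is necessarily a self-contained alternative, and a sound one. The kernel computation checks out: the top block row forces $S=\sum_{j=1}^{b-1}\vv_j=\zerovec$ since $Q$ is invertible; each block row $i\geq 1$ reads $Q\vv_0+R(S-\vv_i)=\zerovec$ and hence collapses to $\vv_i=R^{-1}Q\vv_0$; summing these and reusing $S=\zerovec$ gives $(b-1)R^{-1}Q\vv_0=\zerovec$, and since $b-1\neq 0$ in $k$ this forces $\vv_0=\zerovec$ and then $\vv_i=\zerovec$ for all $i$. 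Two of your side remarks deserve emphasis rather than apology. First, $b\geq 2$ is indeed a necessary (and tacit) hypothesis, since for $b=1$ the matrix is the $a\times a$ zero matrix; flagging this is correct, not fussy. Second, the dependence on the characteristic is real and not an artifact of your method: if $\operatorname{char} k = p$ with $p\mid b-1$, then choosing any $\vv_0\neq\zerovec$ and setting $\vv_i=R^{-1}Q\vv_0$ for $i\geq 1$ produces a nonzero kernel vector (the top row yields $(b-1)QR^{-1}Q\vv_0=\zerovec$ and row $i$ yields $(b-1)Q\vv_0=\zerovec$), so the lemma genuinely fails over such fields. This is consistent with how the lemma is deployed here, since Section 3 fixes $k$ of characteristic zero from the outset, and it is in line with the fact that positive-characteristic versions of the Mignon--Ressayre bound, as in \cite{CCL}, require separate treatment.
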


\subsection{Proof of the lower bound for hyperoctahedral permanents}
 Recall that $\hoperm_n$ is a polynomial in $2n^2$ variables $\{ x_{i,j}: 1 \leq i \leq n, 1 \leq \pm j \leq n\}$. We take these variables to form an $(n \times 2n)$ generic matrix
\[ X = \begin{bmatrix}
    x_{1,1} & \cdots & x_{1,n} & x_{1,-1} & \cdots & x_{1,-n} \\
    \vdots &  & \ddots        &          &       & \vdots \\
    x_{n,1} & \cdots & x_{n,n} & x_{n,-1} & \cdots & n_{1,-n}
\end{bmatrix}.\]

We begin by proving a recursive formula for the hyperoctahedral permanent using row expansion.
\begin{lemma}\label{lem:hoperm_recurrence}
  Let $n \geq 2$ and $X = (x_{i,j})$ be as above. Then
  \[ \hoperm_n(X) = \sum_{j=1}^n \left(x_{i,j} + x_{i,-j} \right) \hoperm_{n-1}(X_{i, \pm j}) \]
  where $X_{i, \pm j}$ is obtained from $X$ by removing row $i$ and the columns indexed by $j$ and $-j$. 
\end{lemma}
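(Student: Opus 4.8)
The plan is to imitate the Laplace (cofactor) expansion of the ordinary permanent along a row, the only new feature being that the two ``paired'' columns $j$ and $-j$ must be handled together. Fix the row index $i$. Every term of $\hoperm_n$ is a product $\prod_{k=1}^n x_{k,\epsilon_k\sigma(k)}$ indexed by a pair $(\sigma,\epsilon)$ with $\sigma \in S_n$ and $\epsilon \in \{\pm 1\}^n$, and the factor coming from row $i$ is exactly $x_{i,\epsilon_i\sigma(i)}$. The idea is to partition the index set of pairs according to the value $j = \sigma(i) \in [n]$ and the sign $\epsilon_i \in \{\pm 1\}$, pull the row-$i$ factor out front, and recognize the remaining sum as a smaller hyperoctahedral permanent.

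Concretely, I would first regroup the defining sum as
\[ \hoperm_n(X) = \sum_{j=1}^n \sum_{\epsilon_i \in \{\pm 1\}} x_{i,\epsilon_i j} \left( \sum_{\substack{\sigma \in S_n \\ \sigma(i)=j}} \; \sum_{(\epsilon_k)_{k\neq i} \in \{\pm 1\}^{n-1}} \prod_{k \neq i} x_{k,\epsilon_k\sigma(k)} \right). \]
Since $x_{i,(+1) j}=x_{i,j}$ and $x_{i,(-1) j}=x_{i,-j}$, and the inner double sum does not depend on $\epsilon_i$, the two sign choices combine to give the coefficient $x_{i,j}+x_{i,-j}$, which is precisely the factor appearing on the right-hand side of the claimed recurrence.

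It then remains to identify the inner double sum with $\hoperm_{n-1}(X_{i,\pm j})$. For this I would set up the natural bijection: a permutation $\sigma \in S_n$ with $\sigma(i)=j$ restricts to a bijection from $[n]\setminus\{i\}$ onto $[n]\setminus\{j\}$, and listing the surviving rows and columns in increasing order identifies this restriction with an element of $S_{n-1}$; the signs $(\epsilon_k)_{k\neq i}$ likewise correspond to an element of $\{\pm 1\}^{n-1}$. Under this relabeling the product $\prod_{k\neq i} x_{k,\epsilon_k\sigma(k)}$ becomes exactly the generic term of $\hoperm_{n-1}$ evaluated on $X_{i,\pm j}$, because deleting the paired columns $j$ and $-j$ simultaneously leaves a matrix whose columns are again indexed by $\pm 1,\dots,\pm(n-1)$.

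The computation is essentially bookkeeping, so the step I expect to require the most care is this last one: checking that removing the \emph{two} columns $j$ and $-j$ (rather than a single column, as in the classical permanent expansion) is exactly what is needed for the reindexed inner sum to reproduce $\hoperm_{n-1}$ on the submatrix. This is the feature that forces the factor $x_{i,j}+x_{i,-j}$ instead of a single variable, and it is what distinguishes this recurrence from the ordinary permanent case.
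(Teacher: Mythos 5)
Your proposal is correct and follows essentially the same route as the paper: both expand the defining sum along row $i$, group the two sign choices $\epsilon_i = \pm 1$ to produce the factor $x_{i,j}+x_{i,-j}$, and identify the remaining inner sum with $\hoperm_{n-1}(X_{i,\pm j})$. The only difference is that you spell out the relabeling bijection between permutations with $\sigma(i)=j$ and elements of $S_{n-1}$, a step the paper's proof leaves implicit in its final equality.
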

\begin{proof}
  \begin{align*}  \hoperm_n &= \sum_{\sigma \in S_n} \sum_{(\epsilon_1,\dots,\epsilon_n) \in \{\pm 1\}^n} \prod_{k=1}^n x_{k, \epsilon_k \sigma(k)} \\
    &= \sum_{j=1}^n \sum_{\substack{\sigma \in S_n \\ \sigma(i)=j}} (x_{i,j} + x_{i,-j}) \sum_{\epsilon_1,\dots,\hat{\epsilon_i},\dots,\epsilon_n \in \{\pm 1\}^{n-1}}  \prod_{k \neq i}x_{k,\epsilon_k \sigma(k)} \\
    &= \sum_{j=1}^n \left(x_{i,j} + x_{i,-j} \right) \hoperm_{n-1}(X_{i, \pm j}).
    \qedhere
    \end{align*}
\end{proof}

In order to identify an appropriate zero, we need the following observation. Let $U_{n,m}$ be the $n \times m$ matrix of all ones.

\begin{lemma}\label{lem:hoperm_ones}
For all $n$, we have $\hoperm_n(U_{n,2n}) = 2^n n!$. 
\end{lemma}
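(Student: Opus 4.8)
The plan is to evaluate the defining double sum directly. Setting every variable $x_{i,j}$ equal to $1$ turns each product $\prod_{i=1}^n x_{i,\epsilon_i\sigma(i)}$ into $\prod_{i=1}^n 1 = 1$, so that $\hoperm_n(U_{n,2n})$ simply counts the number of terms. That double sum ranges over all $\sigma \in S_n$, of which there are $n!$, and over all sign vectors $(\epsilon_1,\dots,\epsilon_n) \in \{\pm 1\}^n$, of which there are $2^n$. Hence the total is $n!\cdot 2^n = 2^n n!$, as claimed. Note that for this particular evaluation we do not even need the monomials indexed by distinct pairs $(\sigma,\epsilon)$ to be distinct as polynomials, since every term contributes $1$ regardless of whether any collisions occur.

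Alternatively, and as a warm-up for the inductive arguments to come, I would deduce the same formula from the recurrence of Lemma \ref{lem:hoperm_recurrence}. Deleting row $i$ and the columns indexed by $j$ and $-j$ from $U_{n,2n}$ yields the all-ones matrix $U_{n-1,2(n-1)}$, so evaluating the recurrence at the all-ones matrix gives $\hoperm_n(U_{n,2n}) = \sum_{j=1}^n (1+1)\,\hoperm_{n-1}(U_{n-1,2(n-1)}) = 2n\,\hoperm_{n-1}(U_{n-1,2(n-1)})$. Together with the base case $\hoperm_1(U_{1,2}) = x_{1,1}+x_{1,-1}$ evaluated at ones, which equals $2 = 2^1\cdot 1!$, an easy induction yields $\hoperm_n(U_{n,2n}) = 2n\cdot 2^{n-1}(n-1)! = 2^n n!$.

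There is essentially no obstacle here: the statement is a pure evaluation, and the only thing requiring care is the bookkeeping of how many terms appear in the expansion, which is immediate from the definition. The real content of the lemma lies not in the computation but in its later role in the proof of Theorem \ref{thm:lowerbounds}, where this normalization is used to locate an appropriate point at which to apply the Hessian rank bound of Corollary \ref{cor:Hessian_dc_bound}; I would therefore state it in the clean direct form above and defer any further structural use to that argument.
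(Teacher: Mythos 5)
Your proposal is correct, and your primary argument is genuinely different from the paper's. The paper proves this lemma by induction, using the recurrence of Lemma \ref{lem:hoperm_recurrence} exactly as in your second (``warm-up'') paragraph: the base case $n=1$ gives $2$, and the recurrence at the all-ones matrix gives the factor $2n$ at each step. Your first argument instead evaluates the defining double sum directly: at the all-ones matrix every term equals $1$, so the value is just the number of pairs $(\sigma,\epsilon) \in S_n \times \{\pm 1\}^n$, namely $2^n n!$. This is more elementary --- it needs no recurrence and no induction --- and your remark that possible monomial collisions are irrelevant (since $\hoperm_n$ is \emph{defined} as the sum over pairs, not over distinct monomials) correctly dismisses the only conceivable objection; in fact distinct pairs do give distinct monomials, since the entries of a monomial determine $\sigma$ and $\epsilon$. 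What the paper's inductive route buys is uniformity with the surrounding material: the same recurrence-plus-induction template is reused immediately afterwards in Proposition \ref{prop:hoperm_zero1} and in the Hessian computation, where a direct count is no longer available because the evaluation point is not the all-ones matrix. Either proof suffices for the lemma's role in locating the zero $B$ and normalizing the Hessian entries.
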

\begin{proof}
For $n=1$ we have $U_{1,2} = (1, 1)$ so the hyperoctedral permanent is $1+1 = 2^1 1!$. For $n \geq 2$, we use Lemma \ref{lem:hoperm_recurrence} and induction to obtain
\[\hoperm_{n+1}(U_{n+1,2n+2})=\sum\limits_{j=1}^{n+1} 2 \cdot \hoperm_{n}(U_{n,2n}) =\sum\limits_{j=1}^{n+1} 2 \cdot 2^{n}n! =2^{n+1}(n+1)!. \qedhere\]
\end{proof}

\begin{proposition}\label{prop:hoperm_zero1}
Consider the $n \times 2n$ matrix $B=(b_{i,j})$ where
\[b_{i,j} = \begin{cases}  -2n+1 & \mbox{if } i=n \ \mbox{and} \ j=n \\ 1 & \mbox{otherwise. }  \end{cases}\]
Then $\hoperm_{n}(B)=0$.
\end{proposition}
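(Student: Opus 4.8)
The plan is to expand $\hoperm_n(B)$ along the last row using Lemma \ref{lem:hoperm_recurrence}, exploiting the fact that the only anomalous entry of $B$ sits in row $n$. First I would apply the recurrence with $i = n$, obtaining
\[ \hoperm_n(B) = \sum_{j=1}^n \left( b_{n,j} + b_{n,-j} \right) \hoperm_{n-1}(B_{n, \pm j}). \]
The point of choosing row $n$ is that deleting it removes the entry $b_{n,n}=-2n+1$ entirely, so every minor becomes uniform.

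Next I would observe that after deleting row $n$, the surviving rows $1, \dots, n-1$ consist entirely of ones, and deleting columns $j$ and $-j$ leaves an $(n-1) \times (2n-2)$ all-ones matrix. Hence each minor satisfies $B_{n, \pm j} = U_{n-1, 2n-2}$, and by Lemma \ref{lem:hoperm_ones} we have $\hoperm_{n-1}(B_{n, \pm j}) = 2^{n-1}(n-1)!$, a constant independent of $j$ that can be pulled out of the sum. It then remains to compute the column coefficients $b_{n,j} + b_{n,-j}$: for each $j \neq n$ both entries equal $1$ and contribute $2$, while for $j = n$ the entries are $-2n+1$ and $1$, contributing $-2(n-1)$. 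Summing over the $n-1$ positive contributions and the single negative one gives
\[ \hoperm_n(B) = \left[ 2(n-1) - 2(n-1) \right] 2^{n-1}(n-1)! = 0. \]

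There is no genuine obstacle in the computation; the substance lies in the design of $B$. The value $-2n+1$ is calibrated precisely so that, upon row expansion, the lone negative coefficient $-2(n-1)$ exactly cancels the $n-1$ positive coefficients of $2$ coming from the remaining columns, while simultaneously the perturbation is confined to a single row so that every minor collapses to the all-ones matrix of known value. The only point requiring care is bookkeeping the sign convention in the recurrence and confirming that both the $+n$ and $-n$ columns behave as claimed, which is immediate from the explicit form of $B$.
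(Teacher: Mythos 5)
Your proof is correct and follows essentially the same route as the paper: expand along row $n$ via Lemma \ref{lem:hoperm_recurrence}, evaluate each all-ones minor as $2^{n-1}(n-1)!$ by Lemma \ref{lem:hoperm_ones}, and observe the cancellation $2(n-1) - 2(n-1) = 0$. The paper merely packages this as an induction with a separately checked base case $n=1$ (where the recurrence is unavailable), but the inductive hypothesis is never actually used in its $n \geq 2$ step, so the two arguments coincide.
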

\begin{proof}
The proof is again by induction on $n$. If $n = 1$, then $B = (-1, 1)$ and so $\hoperm(B) = -1+1=0$. If $n \geq 2$, then from Lemma \ref{lem:hoperm_recurrence} and Lemma \ref{lem:hoperm_ones} we obtain
\begin{align*}
\hoperm_{n}(B)&= \sum_{j=1}^{n-1} 2 \hoperm_{n-1}(U_{n-1,2n-2})+(-2n+1+1)\hoperm_{n-1}(U_{n-1,2n-2}) \\
&=2(n-1)2^{n-1}(n-1)!+(-2n+2)2^{n-1}(n-1)! \\
&=0.
\qedhere
\end{align*}
\end{proof}
We now analyze the Hessian matrix of $\hoperm_n$ at the zero $B$. Let $W_n=U_{(n,n)}-I_n$.

\begin{proposition}\label{prop:hessianhoperm}
  Order the variables of $\hoperm$ as follows:
  \[a_{1,1},\ldots,a_{1,n},\ldots,a_{n,1},\ldots,a_{n,n},a_{1,-1},\ldots,a_{1,-n},\ldots,a_{n,-1},\ldots,a_{n,-n}.\]
  Then the Hessian matrix of $\hoperm$ evaluated at $B$ is 
  \[ H(B)=\left(\begin{array}{cc}
    C & C \\
    C & C
\end{array}\right)\]
where $C$ is the $n^2\times n^2$ matrix given in $n \times n$ blocks by
\[C=2^{n-2}(n-3)!\left(\begin{array}{ccccc}
     0 & A  & \cdots & A & (n-2)W\\
     A & 0  & \cdots & A & (n-2)W\\
     \vdots & \vdots & \ddots & \vdots & \vdots \\
     A & A &  \cdots & 0 & (n-2)W\\
     (n-2)W & (n-2)W & \cdots & (n-2)W & 0
\end{array}\right),\]
%and $A$ is the $n\times n$ matrix given by 
\[\textup{where } A=\left(\begin{array}{ccccc}
     0 & -2  & \cdots & -2 & n-2\\
     -2 & 0  & \cdots & -2 & n-2\\
     \vdots & \vdots & \ddots & \vdots & \vdots \\
     -2 & -2 &  \cdots & 0 & n-2\\
     n-2 & n-2 & \cdots & n-2 & 0
\end{array}\right).\]
\end{proposition}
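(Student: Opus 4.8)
The plan is to compute each second partial derivative $\partial^2 \hoperm_n / \partial x_{i,j}\,\partial x_{k,l}$ directly at $B$ and then organize the results into the claimed block form. The first key observation is that $\hoperm_n$ is multilinear and that each of its monomials uses exactly one variable from each row of $X$. Consequently, if two variables $x_{i,j}$ and $x_{k,l}$ lie in the same row (that is, $i=k$), then no monomial contains both, so the corresponding second partial vanishes; this accounts for the zero diagonal blocks of $C$. Moreover, since $\sigma$ ranges over permutations, if $i \neq k$ but $|j| = |l|$ then again no monomial can contain both variables (a single $\sigma$ cannot send two distinct rows to the same absolute column), so this second partial also vanishes, producing the zero diagonals inside the blocks $A$ and $(n-2)W$.

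For the remaining case $i \neq k$ and $|j| \neq |l|$, I would differentiate using the structure of the double sum: fixing $\sigma(i) = |j|$, $\sigma(k) = |l|$ and the signs $\epsilon_i = \mathrm{sign}(j)$, $\epsilon_k = \mathrm{sign}(l)$, the surviving factor is exactly $\hoperm_{n-2}$ evaluated on the submatrix $B''$ of $B$ obtained by deleting rows $i,k$ and the four columns $\pm|j|,\pm|l|$. The crucial point is that this submatrix, and hence the value of the second partial, depends only on the rows $i,k$ and the absolute columns $|j|,|l|$, never on the signs of $j$ and $l$. This sign-independence is precisely what forces the four $n^2 \times n^2$ sign-blocks of the Hessian to coincide, yielding the global form $\left(\begin{smallmatrix} C & C \\ C & C \end{smallmatrix}\right)$, and it reduces the entire problem to identifying the entries of the single block $C$ indexed by (row, absolute column) pairs $(i,|j|)$.

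To evaluate these entries I would split into cases according to whether the distinguished entry $b_{n,n} = -2n+1$ survives the deletion. It survives exactly when $n \notin \{i,k\}$ and $n \notin \{|j|,|l|\}$; otherwise $B''$ is the all-ones matrix $U_{n-2,2n-4}$, and Lemma \ref{lem:hoperm_ones} gives $\hoperm_{n-2}(U_{n-2,2n-4}) = 2^{n-2}(n-2)!$, which after factoring out $2^{n-2}(n-3)!$ produces the entries equal to $n-2$ in the last row and column of $A$ and throughout the blocks $(n-2)W$. When the distinguished entry does survive, I would compute $\hoperm_{n-2}$ of the all-ones matrix perturbed in that single entry by exploiting multilinearity: $\hoperm_{n-2}(B'') = \hoperm_{n-2}(U_{n-2,2n-4}) + (-2n)\cdot 2^{n-3}(n-3)!$, where the partial derivative in the distinguished entry is evaluated as $\hoperm_{n-3}(U_{n-3,2n-6}) = 2^{n-3}(n-3)!$ by a second application of Lemma \ref{lem:hoperm_ones}. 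Simplifying yields $-2^{n-1}(n-3)!$, i.e.\ the entries equal to $-2$ in the interior of $A$.

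The main obstacle is bookkeeping rather than any single hard step: one must track carefully how the deletion of rows $\{i,k\}$ and columns $\{\pm|j|,\pm|l|\}$ interacts with the location $(n,n)$ of the distinguished entry, and then verify that the three resulting normalized values ($0$, $n-2$, and $-2$) land in exactly the positions prescribed by the patterns of $A$, $(n-2)W$, and the zero diagonal. I would also keep an eye on the small cases, since the factor $(n-3)!$ and the derivative computation implicitly require $n \geq 3$, the standing hypothesis; for instance one checks directly that $n=3$ reduces to $\hoperm_1$ of a $1 \times 2$ matrix and reproduces the stated entries.
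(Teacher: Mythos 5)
Your proposal is correct and follows essentially the same route as the paper: the sign symmetry between the $\pm$ columns gives the four identical blocks $C$, each second partial reduces to $\hoperm_{n-2}$ of the submatrix of $B$ with rows $i,k$ and columns $\pm|j|,\pm|l|$ deleted, and the case analysis on whether the entry $-2n+1$ survives, combined with Lemma \ref{lem:hoperm_ones}, yields the values $0$, $2^{n-2}(n-2)!$, and $-2^{n-1}(n-3)!$ in exactly the positions of the claimed pattern. The only cosmetic difference is in the one nontrivial case: you evaluate $\hoperm_{n-2}$ of the perturbed all-ones matrix by single-entry multilinearity, whereas the paper expands it along the row containing the special entry via Lemma \ref{lem:hoperm_recurrence}; the two computations are equivalent.
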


\begin{proof}
By the symmetry between positive and negative indices in $\hoperm_n$, we have
  $H_{(i,\pm j),(k, \pm l)} = H_{(i,j),(k, l)}$. Thus $H$ can be partitioned into four identical square blocks, each of which we label $C$. From now on we thus assume that $j$ and $l$ are positive.
  
  We now consider the distinct cases for the indices $i$ and $k$.
  \begin{enumerate}
  \item If $i=k$, then $H_{(i,j),(k,l)}=0$, regardless of $j$ and $l$. This justifies the diagonal blocks of zeros in $C$. 
  \item Suppose $i\neq k$ and $i,k<n$. We then consider different cases for $j$ and $l$. 
    \begin{enumerate}
        \item If $j=l$, then again $H_{(i,j),(k,l)}=0$.
        \item If $j\neq l$ and neither is equal to $n$, then 
        \begin{align*}
            H_{(i,j),(k,l)}&=(-2n+2)\hoperm_{n-3}(U_{n-3,2n-6})+\sum\limits_{t\neq j,l,n} 2\hoperm_{n-3}(U_{n-3,2n-6}) \\
            &=(-2n+2)2^{n-3}(n-3)!+2(n-3)\cdot2^{n-3}(n-3)! \\
            &=-2^{n-1}(n-3)!.
        \end{align*}
        \item If $j\neq l$ and $j$ or $l$ equals $n$, then 
\[ H_{(i,j),(k,l)}=\sum\limits_{t\neq j,l} 2\hoperm_{n-3}(U_{n-3,2n-6}) =2(n-2)2^{n-3}(n-3)! =2^{n-2}(n-2)!.\]
    \end{enumerate}
    This justifies the blocks labelled $A$. 
  \item Finally, suppose $i\neq k$ and either $i$ or $k$ is equal to $n$. If $j = l$ then we still have $H_{(i,j),(k,l)}=0$. Otherwise, we have \[ H_{(i,j),(k,l)}= \hoperm_{n-2}(U_{n-2,2n-4})=2^{n-2}(n-2)!.\] This justifies the blocks labelled $W$. \qedhere
\end{enumerate}
\end{proof}

\begin{proof}[Proof of Theorem \ref{thm:lowerbounds}(2)]
By Lemma \ref{lem:off_diagonal} we have that the matrices $A$ and $(n-2)W$ are nonsingular. Again using Lemma \ref{lem:off_diagonal}, it now follows that the $n \times n$ matrix $C$ is also nonsingular, so the rank of $H$ is $n$. The theorem now follows from Corollary \ref{cor:Hessian_dc_bound} and Proposition \ref{prop:hessianhoperm}.
\end{proof}

\subsection{Proof of the lower bound for multipermanents}
Let $\gamma \in \NN$ and $\mm = (m_1,\dots,m_n)$ be a composition of $\gamma$. As $\mperm_\mm$ is a polynomial in the variables $(x_{ij})$ for $1 \leq i \leq \gamma, 1 \leq j \leq n$, we consider its input to be a $\gamma \times n$ matrix.

Again, we begin with a recurrence for multipermanents and their values at matrices of all ones.

\begin{lemma} \label{lem:mperm_recurrence}
  Let $\mm \in \NN^n$. Then \[\mperm_\mm = \sum_{j=1}^n x_{\gamma, j} \mperm_{\mm - \ee_j}.\]
  where $\ee_j$ is the $j$th standard basis vector and we take $\mperm_{\mm'}$ to be the zero polynomial if $\mm'_j$ is negative for some $j$. 
 \end{lemma}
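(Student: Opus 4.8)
The plan is to establish the recurrence directly from the definition of $\mperm_\mm$ by classifying the functions $\sigma \in \Sigma_\mm$ according to the value they assign to the last element $\gamma \in [\gamma]$. First I would expand
\[ \mperm_\mm = \sum_{\sigma \in \Sigma_\mm} \prod_{i=1}^\gamma x_{i,\sigma(i)}, \]
and observe that every $\sigma \in \Sigma_\mm$ satisfies $\sigma(\gamma) = j$ for exactly one $j \in [n]$. This partitions $\Sigma_\mm$ into the sets $\Sigma_\mm^{(j)} = \{\sigma \in \Sigma_\mm : \sigma(\gamma) = j\}$, allowing me to write the sum as $\sum_{j=1}^n \sum_{\sigma \in \Sigma_\mm^{(j)}} \prod_{i=1}^\gamma x_{i,\sigma(i)}$. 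In each inner sum I can factor out the variable $x_{\gamma,j}$ corresponding to the last coordinate, reducing the product to $\prod_{i=1}^{\gamma-1} x_{i,\sigma(i)}$.

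Next I would identify the restricted functions with the index set of the smaller multipermanent. The key bijection is between $\Sigma_\mm^{(j)}$ and $\Sigma_{\mm - \ee_j}$: if $\sigma \in \Sigma_\mm$ with $\sigma(\gamma) = j$, then its restriction $\sigma' = \sigma|_{[\gamma-1]}$ is a function $[\gamma - 1] \to [n]$ whose fiber sizes are $|\sigma'^{-1}(i)| = m_i$ for $i \neq j$ and $|\sigma'^{-1}(j)| = m_j - 1$, which is exactly the condition defining $\Sigma_{\mm - \ee_j}$ (noting that $\gamma - 1 = \sum_i (\mm - \ee_j)_i$). This restriction is clearly a bijection since $\sigma$ is recovered from $\sigma'$ by setting $\sigma(\gamma) = j$. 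Substituting, the $j$-th inner sum becomes $x_{\gamma,j} \sum_{\sigma' \in \Sigma_{\mm-\ee_j}} \prod_{i=1}^{\gamma-1} x_{i,\sigma'(i)} = x_{\gamma,j}\, \mperm_{\mm - \ee_j}$, and summing over $j$ yields the claimed formula.

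The only genuine subtlety, rather than an obstacle, is the degenerate case: if $m_j = 0$ for some $j$, then no $\sigma \in \Sigma_\mm$ can take the value $j$ at all, so $\Sigma_\mm^{(j)}$ is empty and contributes nothing to the sum. This matches the stated convention that $\mperm_{\mm - \ee_j}$ is the zero polynomial whenever $\mm - \ee_j$ has a negative entry, so the formula remains valid without a separate case analysis. Since the argument is a direct reindexing of a finite sum, I expect no real difficulty; the main point to state carefully is the fiber-size bookkeeping that certifies the bijection $\Sigma_\mm^{(j)} \cong \Sigma_{\mm - \ee_j}$.
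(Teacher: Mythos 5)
Your proof is correct, and the underlying decomposition --- conditioning on the last step --- is the same as the paper's, but you carry it out in a different language. You work directly with the definition of $\mperm_\mm$ as a sum over the set $\Sigma_\mm$ of functions, partitioning by the value $\sigma(\gamma)$, factoring out $x_{\gamma,j}$, and exhibiting the restriction bijection $\Sigma_\mm^{(j)} \cong \Sigma_{\mm-\ee_j}$; the paper instead argues via the poset $P_\mm$ of sub-multisets of $\{1^{m_1},\dots,n^{m_n}\}$, classifying saturated chains from $\emptyset$ to $S$ by the element $j$ added at the last step. The two viewpoints are equivalent, since a saturated chain in $P_\mm$ records exactly the sequence $(\sigma(1),\dots,\sigma(\gamma))$, i.e., an element of $\Sigma_\mm$. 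If anything, your version is the more complete one: the paper's proof refers to $\mperm_{\mm-\ee_j}$ as ``the number of chains,'' which taken literally only establishes the count of terms (the evaluation at the all-ones matrix); upgrading that to the claimed polynomial identity requires tracking the variable labels along each chain, which is precisely the bookkeeping you make explicit through the factorization of $x_{\gamma,j}$ and the fiber-size verification. Your treatment of the degenerate case $m_j=0$, where $\Sigma_\mm^{(j)}$ is empty and this matches the zero-polynomial convention for $\mperm_{\mm-\ee_j}$, is also correct and is a detail the paper's proof passes over silently.
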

\begin{proof}
By definition, $\mperm_{\mm - \ee_j}$ is the number of chains in $P_\mm$ from $\emptyset$ to the multiset $\{1^{m_1}, \dots, i^{m_{i}-1}, \dots, n^{m_n}\}$. The total number of chains from $\emptyset$ to $S$ is obtained by summing over the possible element $j$ to be added at the last step.  
\end{proof}

\begin{lemma} \label{lem:mperm_ones} 
For all $\gamma$ and $n$, we have $\mperm_\mm(U) = \binom{\gamma}{m_1, m_2, \dots, m_n} = \frac{\gamma!}{m_1! m_2! \dots m_n!}$. 
\end{lemma}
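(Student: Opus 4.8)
The plan is to mirror the inductive strategy already used for Lemma~\ref{lem:hoperm_ones}, relying on the recurrence of Lemma~\ref{lem:mperm_recurrence}. Since $U$ is the all-ones matrix, every variable $x_{\gamma,j}$ evaluates to $1$, so the recurrence collapses to
\[ \mperm_\mm(U) = \sum_{j=1}^n \mperm_{\mm-\ee_j}(U), \]
where the terms with $m_j = 0$ vanish because then $\mm - \ee_j$ has a negative component. Each surviving $\mm - \ee_j$ is a composition of $\gamma - 1$, which sets up an induction on $\gamma$.

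For the base case I would take $\gamma = 0$, where $\mm = \zerovec$, the set $\Sigma_\mm$ consists only of the empty function, and both sides equal $1$ (the empty product and $\binom{0}{0,\dots,0}$). For the inductive step I would assume the formula for all compositions of $\gamma - 1$ and substitute it into the displayed recurrence to get
\[ \mperm_\mm(U) = \sum_{j:\, m_j \geq 1} \frac{(\gamma-1)!}{m_1!\cdots (m_j-1)! \cdots m_n!}. \]
Rewriting each summand as $m_j \cdot \frac{(\gamma-1)!}{m_1!\cdots m_n!}$ and summing, the factor $\sum_j m_j = \gamma$ emerges, yielding $\frac{\gamma!}{m_1!\cdots m_n!}$ exactly.

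Alternatively, and perhaps more transparently, one could give a direct combinatorial argument that sidesteps the induction entirely: evaluating at $U$ turns each monomial $\prod_{i} x_{i,\sigma(i)}$ into $1$, so $\mperm_\mm(U) = |\Sigma_\mm|$ is simply the number of functions $\sigma : [\gamma] \to [n]$ with $|\sigma^{-1}(j)| = m_j$ for each $j$, which is the multinomial coefficient $\binom{\gamma}{m_1,\dots,m_n}$ by definition. I would likely present the inductive version in the body to stay consistent with the companion lemma for $\hoperm_n$, and remark on the combinatorial reading.

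I do not anticipate a genuine obstacle here: the only points requiring any care are the bookkeeping convention that $\mperm_{\mm'} = 0$ once a component goes negative (so that the $m_j = 0$ terms drop silently out of the sum) and the standard multinomial Pascal recurrence $\sum_{j=1}^n \binom{\gamma-1}{m_1,\dots,m_j-1,\dots,m_n} = \binom{\gamma}{m_1,\dots,m_n}$, which is precisely what the algebra of the inductive step reproduces.
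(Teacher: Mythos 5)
Your proposal is correct, but the argument you feature (induction on $\gamma$ via Lemma~\ref{lem:mperm_recurrence}) is not the paper's; the paper's proof is precisely your ``alternative'' remark. It observes that evaluating at $U$ turns every monomial into $1$, so $\mperm_\mm(U)$ is just the number of terms of $\mperm_\mm$, i.e.\ the number of saturated chains in the poset $P_\mm$; such a chain amounts to a sequence $a_1,\dots,a_\gamma$ in which $j$ appears exactly $m_j$ times, and $\binom{\gamma}{m_1,\dots,m_n}$ counts such sequences by definition. That is the entire proof --- no recurrence, no induction. Your inductive version is also sound: the base case $\gamma=0$, the silent disappearance of the $m_j=0$ terms via the convention $\mperm_{\mm'}=0$ for negative components, and the step $\sum_j m_j\,\tfrac{(\gamma-1)!}{m_1!\cdots m_n!}=\tfrac{\gamma!}{m_1!\cdots m_n!}$ all check out. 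What it buys is stylistic parallelism with the paper's genuinely inductive proof of Lemma~\ref{lem:hoperm_ones} and reuse of Lemma~\ref{lem:mperm_recurrence}, which is needed later for the Hessian computations anyway; what it costs is length and bookkeeping, since the direct count makes transparent that the lemma is nothing more than $|\Sigma_\mm|=\binom{\gamma}{m_1,\dots,m_n}$, which holds essentially by the definitions of $\Sigma_\mm$ and of the multinomial coefficient.
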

\begin{proof}
Note that $\mperm_\mm(U)$ is simply the number of terms of $\mperm_\mm$, which is the number of saturated chains in the poset $P_\mm$. Such a chain is given by a sequence $a_1, \dots, a_\gamma$ in which the number $j$ appears exactly $m_j$ times, and $\binom{\gamma}{m_1, m_2, \dots, m_n}$ is by definition the number of such sequences.     
\end{proof}

Again, we begin by identifying an appropriate zero of $\mperm_\mm$. There are two cases for the zero we will choose, depending on whether or not $m_1, \dots, m_n$ are all equal. 

\begin{proposition} \label{prop:mperm_zero1} Let $\mm \in \NN^n$.
  \begin{enumerate}
  \item If $m_1, \dots, m_n$ are not all equal, then let $k$ be such that $m_k = \max\{m_1, \dots, m_n\}$, $C = \{j: m_j = m_k \}$, and $c=|C|$. Consider the $\gamma \times n$ matrix $B^{(1)} = (b_{i,j})$ where 
 \[b_{i,j} = \begin{cases} 1-\frac{\gamma}{c m_k} & \mbox{ if }i=\gamma \mbox{ and }  j \in C \\
    1 & \mbox{ otherwise }. \end{cases}\]
\item If $m_1 = \cdots = m_n$, then consider the $\gamma \times n$ matrix $B^{(2)} = (b_{i,j})$ where 
  \[b_{i,j} = \begin{cases} 1-n & \mbox{ if }i=1 \mbox{ and }  j =1 \\
    1 & \mbox{ otherwise }. \end{cases}\]
  \end{enumerate}
  In each case, the chosen matrix $B^{(\ell)}$ is a zero of $\mperm_\mm$.
\end{proposition}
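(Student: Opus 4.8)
The plan is to reduce both cases to the single recurrence of Lemma \ref{lem:mperm_recurrence}, evaluated at a matrix whose top $\gamma-1$ rows are all ones. First I would record the following general observation: if $B$ is a $\gamma \times n$ matrix whose first $\gamma-1$ rows are all equal to $1$, then expanding on the last row via Lemma \ref{lem:mperm_recurrence} gives $\mperm_\mm(B) = \sum_{j=1}^n b_{\gamma,j}\,\mperm_{\mm-\ee_j}(U_{\gamma-1,n})$, since each polynomial $\mperm_{\mm-\ee_j}$ involves only the first $\gamma-1$ rows. By Lemma \ref{lem:mperm_ones}, $\mperm_{\mm-\ee_j}(U_{\gamma-1,n}) = \binom{\gamma-1}{m_1,\dots,m_j-1,\dots,m_n} = \frac{m_j}{\gamma}\binom{\gamma}{m_1,\dots,m_n}$. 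Writing $M = \binom{\gamma}{m_1,\dots,m_n}$, this collapses to
\[\mperm_\mm(B) = \frac{M}{\gamma}\sum_{j=1}^n m_j\, b_{\gamma,j},\]
so it suffices to verify that $\sum_{j=1}^n m_j b_{\gamma,j} = 0$ in each case.

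For the matrix $B^{(1)}$ the top $\gamma-1$ rows are already all ones, so I would substitute directly and split the sum according to whether $j \in C$. The entries with $j \notin C$ contribute $\sum_{j \notin C} m_j = \gamma - c m_k$, while the entries with $j \in C$ (all of which have $m_j = m_k$) contribute $\sum_{j \in C} m_k\bigl(1 - \tfrac{\gamma}{c m_k}\bigr) = c m_k - \gamma$. These cancel, giving $0$.

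The matrix $B^{(2)}$ needs one extra observation, since there the perturbed entry sits in the first row rather than the last. Here I would invoke the fact that $\mperm_\mm$ is invariant under permuting the rows of its input: this is immediate from the definition, because precomposing each $\sigma \in \Sigma_\mm$ with a permutation of the domain $[\gamma]$ is a bijection of $\Sigma_\mm$ onto itself. Moving the perturbed row to the bottom therefore does not change the value, and the resulting matrix has all-ones top rows, so the collapsed formula applies. With $m_1 = \cdots = m_n = m$ and $\gamma = nm$, the sum becomes $\sum_j m_j b_{\gamma,j} = m(1-n) + m(n-1) = 0$, as desired.

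The computations themselves are routine; the only point that needs care is the treatment of $B^{(2)}$, where one must first transport the perturbation to the last row using the row-permutation symmetry before the recurrence of Lemma \ref{lem:mperm_recurrence} (which expands specifically on row $\gamma$) can be applied. Everything else is a direct substitution into the collapsed formula above, followed by the two short cancellations.
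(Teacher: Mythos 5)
Your proof is correct and follows essentially the same route as the paper: expand along the perturbed row via Lemma \ref{lem:mperm_recurrence} and evaluate the remaining multipermanents with Lemma \ref{lem:mperm_ones}, then cancel. If anything, your version is slightly more careful than the paper's own: the paper applies the recurrence to row $1$ of $B^{(2)}$ even though Lemma \ref{lem:mperm_recurrence} as stated expands along row $\gamma$, a step you explicitly justify with the row-permutation invariance of $\mperm_\mm$; your collapsed formula $\mperm_\mm(B) = \tfrac{M}{\gamma}\sum_{j} m_j b_{\gamma,j}$ also unifies the two cases cleanly.
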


\begin{proof}
 \begin{enumerate}
  \item  Using Lemmas \ref{lem:mperm_ones} and \ref{lem:mperm_recurrence}, we obtain that $\mperm_\mm\left(B^{(1)}\right)$ equals
    \begin{align*}
      &  \sum_{j\notin C}  1 \cdot \mperm_{\mm - \ee_j}(U_{\gamma-1,n-1})  + \sum_{j\in C}  \left(1-\frac{\gamma}{c \cdot m_k}\right) \cdot \mperm_{\mm - \ee_j}(U_{\gamma-1,n-1})   \\
   &= \sum_{j\notin C}  \binom{\gamma-1}{m_1,\dots,m_j-1, \dots, m_n}  + \sum_{j\in C}  \left(1-\frac{\gamma}{c \cdot m_k}\right) \cdot  \binom{\gamma-1}{m_1,\dots,m_j-1, \dots, m_n}  \\ 
   &=  \sum_{j \notin C} \frac{m_j}{m_k}\binom{\gamma-1}{m_1,\dots,m_k-1, \dots, m_n}  + \left(1-\frac{\gamma}{c \cdot m_k}\right) c\binom{\gamma-1}{m_1,\dots,m_k-1, \dots, m_n} \\
   &=  \binom{\gamma-1}{m_1,\dots,m_k-1, \dots, m_n}\left(\frac{1}{m_k}\left(\gamma-\sum_{j \in C} m_j\right)+ \left(1-\frac{\gamma}{c \cdot m_k}\right) c\right) \\
   &=  \binom{\gamma-1}{m_1,\dots,m_k-1, \dots, m_n}\left(\frac{1}{m_k}\left(\gamma-c \cdot m_k\right)+ \left(1-\frac{\gamma}{c \cdot m_k}\right) c\right) \\ 
   &= 0, \end{align*}
 where the third equality follows because
  \[ \frac{m_j}{m_i} \binom{\gamma-1}{m_1,\dots,m_i-1, \dots, m_n}= \binom{\gamma-1}{m_1,\dots,m_j-1, \dots, m_n}\]
  whenever $m_i$ and $m_j$ are both positive, and the remaining terms in the sum are all zero.   
\item By Lemmas \ref{lem:mperm_ones} and \ref{lem:mperm_recurrence}, we obtain
    \begin{align*}
    \mperm_\mm\left(B^{(2)}\right) &=(1-n)\mperm_{\mm - \ee_1}(U_{mn-1,n-1})+ \sum_{j=2}^n 1 \cdot \mperm_{\mm - \ee_j}(U_{mn-1,n-1}) \\
    &=(1-n)\binom{mn-1}{m-1, \dots, m}+ \sum_{j=2}^n \binom{mn-1}{m,\dots,m-1, \dots, m} \\
    &=(1-n)\frac{(mn-1)!}{(m!)^{n-1}
    (m-1)!}+\sum\limits_{j=2}^n \frac{(mn-1)!}{(m!)^{n-1}
    (m-1)!} \\
    &=0. \qedhere
\end{align*}
\end{enumerate}
\end{proof}
Now we need to calculate the Hessian matrix of the multipermanent. Given a matrix $X$, let $X_i$ be the matrix obtained from $X$ by removing row $i$ and let $X_{ii'}$ be the matrix obtained from $X$ by removing rows $i$ and $i'$.

\begin{lemma} \label{lem:mperm_derivates}
Let $i,i' \in [\gamma]$ with $i\neq i'$, and $j,j' \in [n]$. Then
\begin{align*}
\frac{\partial \mperm_{\mm}}{\partial x_{i,j}}(X) &=\mperm_{\mm-\ee_j}(X_i), \\
\frac{\partial^2 \mperm_{\mm}}{\partial x_{i',j'} \partial x_{i,j}}(X) &=\mperm_{\mm-\ee_j-\ee_{j'}}(X_{ii'}).
\end{align*}
\end{lemma}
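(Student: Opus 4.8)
The plan is to compute both partial derivatives directly from the definition of $\mperm_\mm$ as a sum over functions $\sigma \in \Sigma_\mm$, differentiating the explicit monomials $\prod_{k=1}^\gamma x_{k,\sigma(k)}$. Since every variable $x_{i,j}$ appears to at most the first power in each monomial (each factor involves a distinct row index $k$), differentiation amounts to selecting and deleting the appropriate factors and retaining exactly those functions $\sigma$ whose values on the deleted rows match the prescribed column indices.

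First I would handle the first-order derivative. Differentiating $\prod_{k=1}^\gamma x_{k,\sigma(k)}$ with respect to $x_{i,j}$ kills every monomial in which $\sigma(i) \neq j$ and, for the surviving terms, removes the factor $x_{i,j}$, leaving $\prod_{k \neq i} x_{k,\sigma(k)}$. The key bijective observation is that functions $\sigma \in \Sigma_\mm$ with $\sigma(i) = j$ correspond exactly to functions $[\gamma] \setminus \{i\} \to [n]$ with preimage sizes given by $\mm - \ee_j$: deleting the row $i$ (whose value was $j$) reduces the required multiplicity of $j$ by one while leaving the other multiplicities unchanged. Reindexing the remaining rows to match the rows of $X_i$, the resulting sum is precisely $\mperm_{\mm - \ee_j}(X_i)$. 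I would also note that this requires $m_j \geq 1$ for the sum to be nonempty, which is consistent with the convention that $\mperm_{\mm'}$ is the zero polynomial when some component of $\mm'$ is negative.

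For the second-order derivative, I would apply the same reasoning twice, now differentiating with respect to both $x_{i,j}$ and $x_{i',j'}$ with $i \neq i'$. A monomial survives only when $\sigma(i) = j$ and $\sigma(i') = j'$, and the two deleted factors leave $\prod_{k \neq i, i'} x_{k,\sigma(k)}$. The surviving functions biject with functions on $[\gamma] \setminus \{i, i'\}$ whose preimage sizes are $\mm - \ee_j - \ee_{j'}$, since deleting two rows with values $j$ and $j'$ decrements the multiplicities of $j$ and $j'$ accordingly (if $j = j'$, the multiplicity of $j$ drops by two, which is exactly $\mm - 2\ee_j = \mm - \ee_j - \ee_{j'}$). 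Reindexing to the rows of $X_{ii'}$ yields $\mperm_{\mm - \ee_j - \ee_{j'}}(X_{ii'})$, as claimed. The hypothesis $i \neq i'$ is essential here: it guarantees that the two differentiations act on distinct factors, so no second-order self-interaction arises and the multilinearity argument goes through cleanly.

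The main obstacle, though it is conceptual rather than computational, is setting up the bijection between the restricted index set $\Sigma_\mm$ and the smaller index set cleanly enough that the reindexing of deleted rows is transparent; once the correspondence between ``$\sigma$ with prescribed values on the deleted rows'' and ``functions with decremented multiplicity vector'' is stated, both identities follow immediately from multilinearity of the monomials in each variable. I would therefore spend the bulk of the argument on this bijection and treat the differentiation itself as routine.
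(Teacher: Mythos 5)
Your proof is correct, but it takes a genuinely different route from the paper's. The paper derives both identities by invoking its row-expansion recurrence (Lemma \ref{lem:mperm_recurrence}) twice: it writes $\mperm_\mm = \sum_{k=1}^n x_{i,k}\,\mperm_{\mm-\ee_k}(X_i)$, differentiates with respect to $x_{i,j}$ to isolate the term $k=j$, then expands $\mperm_{\mm-\ee_j}(X_i)$ along row $i'$ and differentiates again. You instead differentiate the defining sum $\sum_{\sigma \in \Sigma_\mm} \prod_{k} x_{k,\sigma(k)}$ directly, using multilinearity and the bijection between $\{\sigma \in \Sigma_\mm : \sigma(i)=j\}$ (resp.\ with the added constraint $\sigma(i')=j'$) and the functions on the remaining rows with multiplicity vector $\mm-\ee_j$ (resp.\ $\mm-\ee_j-\ee_{j'}$). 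The two arguments encode the same combinatorial content --- your bijection is essentially what proves the recurrence in the first place --- but yours is self-contained and has one small advantage: the paper states its recurrence only for expansion along the last row $\gamma$, so applying it along an arbitrary row $i$ (as the paper's proof does) tacitly uses the symmetry of $\mperm_\mm$ under permutation of rows, a step the paper never makes explicit; your direct computation avoids needing it. You also handle the degenerate cases explicitly ($m_j = 0$, and $j=j'$ with the multiplicity dropping by two), which the paper delegates to its convention that $\mperm_{\mm'}$ is the zero polynomial when a component of $\mm'$ is negative. What the paper's route buys in exchange is brevity: once the recurrence is available, both identities follow in two lines, since the coefficient polynomials $\mperm_{\mm-\ee_k}(X_i)$ contain no variables from row $i$.
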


\begin{proof}
Using Lemma \ref{lem:mperm_recurrence}, we have
 \begin{align*}
\mperm_\mm &= \sum_{k=1}^n x_{i, k} \mperm_{\mm - \ee_k}.
\end{align*}
Differentiating both sides with respect to $x_{ij}$, we obtain 
\[ \frac{\partial \mperm_\mm}{\partial x_{ij}}(X) = \frac{\partial}{\partial x_{ij}}\sum_{k=1}^n x_{i, k} \mperm_{\mm - \ee_k}(X_i) = \mperm_{\mm-\ee_j}(X_i).\]
Again, using Lemma \ref{lem:mperm_recurrence} and then differentiating with respect to $x_{i'j'}$, we obtain
\[\mperm_{\mm-\ee_j}(X_i)=\sum_{k=1}^n x_{i', k} \mperm_{\mm - \ee_j-\ee_k}(X_{ii'}),\]
and so
\[\frac{\partial^2 \mperm_\mm}{\partial x_{i'j'}\partial x_{ij}}(X) = \frac{\partial}{\partial x_{i'j'}}\mperm_{\mm-\ee_j}(X_i) = \mperm_{\mm - \ee_j-\ee_{j'}}(X_{ii'}). \qedhere \]
\end{proof}
We note that if $i=i'$, then $\frac{\partial^2 \mperm_\mm}{\partial x_{i'j'}\partial x_{ij}}(X)=0$.

In order to simplify our analysis, we will from now on assume that $\mm$ is a partition. In the case that not all components are equal, we thus have that $m_1 = \cdots = m_c > m_{c+1 }\geq \cdots \geq m_n$. Let $d=\frac{\gamma}{c \max_i m_i}$. 

\begin{proposition}\label{prop:hessianperm1} Let $\mm$ be a partition and set $\ell = 2$ if all components of $\mm$ are equal or 1 otherwise. The Hessian matrix $H$ of $\mperm_\mm$ evaluated at the zero $B^{(\ell)}$ is of the block diagonal form
 \[ H\left(B^{(\ell)}\right)=\left(\begin{array}{ccccc}
     0 & R^{(\ell)}  & \cdots & R^{(\ell)} & Q^{(\ell)}\\
     R^{(\ell)} & 0  & \cdots & R^{(\ell)} & Q^{(\ell)}\\
     \vdots & \vdots & \ddots & \vdots & \vdots \\
     R^{(\ell)} & R^{(\ell)} &  \cdots & 0 & Q^{(\ell)}\\
     Q^{(\ell)} & Q^{(\ell)} & \cdots & Q^{(\ell)} & 0
 \end{array}\right)_{\gamma n \times \gamma n} ,\]
 where 
  \[Q^{(1)}=k_1 \left(\begin{array}{cccc}
      m_1(m_1-1) & m_1m_2 & \cdots & m_1m_n \\
      m_1m_2 & m_2(m_2-1) & \cdots & m_2m_n \\
      \vdots & \vdots & \ddots & \vdots \\
      m_1m_n & m_2m_n & \cdots & m_n(m_n-1) 
  \end{array}\right)_{n \times n},\]\[ R^{(1)}=k_1\left(\begin{smallmatrix}
2m_1(m_1-1)(d-1)  & \cdots & 2m_1m_{c}(d-1) & m_1m_{c+1}(d-2) & \cdots & m_1m_n(d-2) \\
\vdots  & \ddots & \vdots & \vdots  & \ddots & \vdots \\
2m_cm_1(d-1)  & \cdots & 2m_c(m_c-1)(d-1) & m_cm_{c+1}(d-2)  & \cdots & m_cm_n(d-2) \\
m_{c+1}m_1(d-2)  & \cdots & m_{c+1}m_c(d-2) & -2m_{c+1}(m_{c+1}-1)  & \cdots & -2m_{c+1}m_{n} \\
\vdots & \ddots & \vdots & \vdots & \ddots & \vdots \\
m_nm_1(d-2) & \cdots & m_nm_c(d-2) & -2m_nm_{c+1}  & \cdots & -2m_{n}(m_n-1) 
\end{smallmatrix}\right)_{n\times n},\]
\[ Q^{(2)}=k_2(mn-2) \left(\begin{array}{cccc}
      m(m-1) & m^2 & \cdots & m^2 \\
      m^2 & m(m-1) & \cdots & m^2 \\
      \vdots & \vdots & \ddots & \vdots \\
      m^2 & m^2 & \cdots & m(m-1) 
\end{array}\right)_{n\times n},\]
\[ R^{(2)}=k_2m\left(\begin{array}{ccccc}
     2(m-1)(n-1) & m(n-2) & m(n-2) & \cdots & m(n-2) \\
     m(n-2) & -2(m-1) & -2m & \cdots & -2m \\
     m(n-2) & -2m & -2(m-1) & \cdots & -2m \\
     \vdots & \vdots & \vdots & \ddots & \vdots \\
     m(n-2) & -2m & -2m & \cdots & -2(m-1)
  \end{array}\right)_{n\times n},\]

with $k_1=\tfrac{(\gamma-2)!}{m_1!\cdots m_n!}$ and $k_2=\frac{(mn-3)!}{m!^n}$.
%\end{enumerate}
\end{proposition}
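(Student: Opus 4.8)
The plan is to compute the Hessian entrywise from the second-derivative formula of Lemma \ref{lem:mperm_derivates}, namely
\[ H_{(i,j),(i',j')} = \mperm_{\mm - \ee_j - \ee_{j'}}\!\left(B^{(\ell)}_{ii'}\right), \]
and to read off the block pattern from how the distinguished (perturbed) row of $B^{(\ell)}$ behaves when rows $i$ and $i'$ are deleted. Since the second derivative vanishes when $i=i'$ (the remark following Lemma \ref{lem:mperm_derivates}), every diagonal $n\times n$ block is zero, which accounts for the zeros along the block-diagonal of $H$. In case $\ell=2$ the perturbed entry of $B^{(2)}$ sits in row $1$ rather than row $\gamma$; conjugating $H$ by the permutation matrix that sends row $1$ to the last position preserves both the displayed block form and the rank, so I may assume throughout that the distinguished row is row $\gamma$.

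With this convention the off-diagonal blocks split into two families according to whether deleting rows $i,i'$ also deletes the distinguished row. If $i'=\gamma$ (equivalently $i=\gamma$), then $B^{(\ell)}_{ii'}$ is the all-ones matrix $U_{\gamma-2,n}$, so by Lemma \ref{lem:mperm_ones} the entry equals the multinomial coefficient $\binom{\gamma-2}{\mm-\ee_j-\ee_{j'}}$; these produce the blocks $Q^{(\ell)}$. Separating the case $j=j'$, which gives $\binom{\gamma-2}{\mm-2\ee_j}$ and hence a factor $m_j(m_j-1)$, from the case $j\neq j'$, which gives a factor $m_j m_{j'}$, and pulling out the common factorial factor $k_1=\tfrac{(\gamma-2)!}{m_1!\cdots m_n!}$ (respectively $k_2(mn-2)$ when all parts equal $m$, since $\tfrac{(mn-2)!}{m!^n}=(mn-2)k_2$), reproduces the stated $Q^{(1)}$ and $Q^{(2)}$.

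The blocks $R^{(\ell)}$ arise when $i,i'<\gamma$, so the distinguished row survives in $B^{(\ell)}_{ii'}$ while all other rows are ones. Here I would expand along the distinguished row using the recurrence of Lemma \ref{lem:mperm_recurrence}, so that the surviving factor of each summand is an all-ones multipermanent evaluated by Lemma \ref{lem:mperm_ones}. In case $\ell=1$ the distinguished row has entry $1-d$ on the columns of $C$ and $1$ elsewhere, with $d=\tfrac{\gamma}{c\max_i m_i}$, yielding
\[ \mperm_{\mm'}\!\left(B^{(1)}_{ii'}\right) = \binom{\gamma-2}{\mm'} - d\sum_{t\in C}\binom{\gamma-3}{\mm'-\ee_t}, \qquad \mm'=\mm-\ee_j-\ee_{j'}, \]
while in case $\ell=2$ the single perturbed entry $1-n$ in column $1$ gives $\binom{\gamma-2}{\mm'}-n\binom{\gamma-3}{\mm'-\ee_1}$. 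One then case-splits on the positions of $j$ and $j'$ relative to $C$ and on whether $j=j'$, factors the resulting multinomial coefficients using the identity $\tfrac{m_j}{m_i}\binom{\gamma-1}{\dots,m_i-1,\dots}=\binom{\gamma-1}{\dots,m_j-1,\dots}$ already exploited in Proposition \ref{prop:mperm_zero1}, and applies the defining relation $d\,c\max_i m_i=\gamma$ to collapse the bracketed expressions into the coefficients $2(d-1)$, $(d-2)$, or $-2$ appearing in $R^{(\ell)}$.

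The main obstacle is precisely this last bookkeeping. The entries of $R^{(\ell)}$ depend on a four-way interaction (whether each of $j,j'$ is a maximizing index, together with the diagonal case $j=j'$), and in each case several multinomial coefficients must be factored and recombined so that the cancellation $d(c\max_i m_i - r)=\gamma-rd$ produces exactly the right coefficient and sign. Everything else — the vanishing of the diagonal blocks, the identification of the $Q$ blocks with all-ones evaluations, and the overall $\gamma\times\gamma$ block pattern — is immediate from the derivative formula and the location of the perturbed row.
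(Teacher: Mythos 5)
Your proposal follows the paper's proof step for step: the same entrywise reduction via Lemma~\ref{lem:mperm_derivates}, the same explanation of the zero diagonal blocks, the same dichotomy (both deleted rows $\neq$ perturbed row $\Rightarrow$ all-ones matrix $\Rightarrow$ $Q^{(\ell)}$ via Lemma~\ref{lem:mperm_ones}; perturbed row survives $\Rightarrow$ expand along it via Lemma~\ref{lem:mperm_recurrence} $\Rightarrow$ $R^{(\ell)}$), and the same final case split on the positions of $j,j'$ relative to $C$. Your intermediate formula $\mperm_{\mm'}\bigl(B^{(1)}_{ii'}\bigr)=\binom{\gamma-2}{\mm'}-d\sum_{t\in C}\binom{\gamma-3}{\mm'-\ee_t}$ is correct, and your permutation-conjugation remark for $\ell=2$ is actually cleaner than the paper, whose statement places the $Q^{(2)}$ blocks in the last block row and column even though its proof produces them at $i=1$ or $i'=1$.

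Two issues remain. First, the proof stops exactly where the proposition's content begins: the explicit entries of $R^{(\ell)}$ \emph{are} the assertion, and ``one then case-splits and collapses'' is a plan, not a proof; the paper spends most of its argument carrying out precisely those cases. Second --- and this is why the bookkeeping genuinely must be done --- if you carry out your (correct) plan, you will find that the stated $R^{(1)}$ is off by a constant: the true prefactor is $\tfrac{(\gamma-3)!}{m_1!\cdots m_n!}=k_1/(\gamma-2)$, not $k_1$. Concretely, take $\mm=(3,1)$, so $\gamma=4$, $c=1$, $d=4/3$, $k_1=1/3$: the Hessian entry for $(i,j)=(1,1)$, $(i',j')=(2,1)$ is $x_{32}x_{41}+x_{31}x_{42}$ evaluated at $B^{(1)}$, which equals $-\tfrac13+1=\tfrac23$, whereas the stated $k_1\cdot 2m_1(m_1-1)(d-1)=\tfrac43$. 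The same slip occurs in the paper's own proof: the displayed sum for $i,i'<\gamma$ has $(\gamma-3)!$ in its numerators but is equated to $k_1\{\cdots\}$, while in the $\ell=2$ case the constants are consistent because $k_2$ is defined with $(mn-3)!$. The discrepancy is harmless for Theorem~\ref{thm:lowerbounds}, since the rank argument only needs each block to be nonsingular, but you should be aware that a faithful execution of your outline will not reproduce the proposition as literally stated.
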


\begin{proof}
  The zero blocks occur because $\frac{\partial^2}{\partial x_{i,j}\partial x_{i,j'}} \mperm_\mm = 0$ for any $i$, $j$ and $j'$, as noted above. So for the remainder of the proof we assume in all cases that $i \neq i'$. 

  Consider the case that not all components of $\mm$ are equal. If $i = \gamma$ or $i' = \gamma$, then by the definition of $B^{(1)}$ and Lemma \ref{lem:mperm_derivates} we have
  \begin{align*}
            H_{(i,i'),(j,j')}\left(B^{(1)}\right)&=\mperm_{\mm-\ee_j-\ee_{j'}}(B^{(1)}_{ii'}) \\
            &=\mperm_{\mm-\ee_j-\ee_{j'}}(U_{(\gamma-2,n)}) \\
            &=k_1\left\{ \begin{array}{lcc}
             m_j m_{j'} &  if  & j\neq j' \\
             m_j(m_j-1) &  if & j= j'
             \end{array}\right.         
\end{align*}
  This justifies the matrix $Q^{(1)}$. On the other hand, if $i, i' < \gamma$ then
  \begin{align*}
            H_{(i,i'),(j,j')}\left(B^{(1)}\right)
            &=\sum\limits_{k=1}^n B^{(1)}_{\gamma,k}\mperm_{\mm-\ee_j-\ee_{j'}-\ee_k}(U_{(\gamma-3,n)}) \\
            &=\sum\limits_{k=1}^c (1-d)\frac{(\gamma-3)!}{m_1!\cdots(m_j-1)!\cdots(m_{j'}-1)!\cdots(m_k-1)!\cdots m_n!} \\
            &\ +\sum\limits_{k=c+1}^n \frac{(\gamma-3)!}{m_1!\cdots(m_j-1)!\cdots(m_{j'}-1)!\cdots(m_k-1)!\cdots m_n!} \\
            &=k_1\left\{ \begin{array}{lcc}
             2(d-1)m_j m_{j'} &  if  & j\neq j' \ \text{and} \ j,j'\leq c \\
             2(d-1)m_j(m_j-1) &  if & j= j' \ \text{and} \ j\leq c \\
             (d-2)m_jm_{j'} &  if & j\leq c \ \text{and} \ j'> c \\
             -2m_jm_{j'} &  if & j\neq j' \ \text{and} \ j,j'> c \\
             -2m_j(m_j-1) &  if & j= j' \ \text{and} \ j> c 
             \end{array}\right. , 
\end{align*}
  which justifies the matrix $R^{(1)}$.

  Now suppose all components of $\mm$ are equal is similar. If $i=1$ or $i'=1$, then
\begin{align*}
            H_{(i,i'),(j,j')}\left(B^{(2)}\right)&=\mperm_{\mm-\ee_j-\ee_{j'}}(B^{(2)}_{ii'}) \\
            &=\mperm_{\mm-\ee_j-\ee_{j'}}(U_{(mn-2,n)}) \\
            &=k(mn-2)\left\{ \begin{array}{lcc}
             m^2 &  if  & j\neq j'  \\
             m(m-1) &  if & j= j'
            \end{array}\right. ,
\end{align*}
which justifies $Q^{(2)}$. Finally, for $R^{(2)}$ we assume $i, i' > 1$ and consider the following cases for $(j,j')$. 
    \begin{enumerate}
        \item If $j=j'=1$, then
         \begingroup\makeatletter\def\f@size{8}\check@mathfonts
\def\maketag@@@#1{\hbox{\m@th\large\normalfont#1}}%
\begin{align*}
            H_{(i,i'),(j,j')}\left(B^{(2)}\right)&=\sum\limits_{k=1}^n B^{(1)}_{1,k}mperm_{m-2e_1-e_k}(U_{(|m|-3,n)}) \\
            &= (1-n)\frac{(mn-3)!}{(m-3)!\cdots m!}  +\sum\limits_{k=2}^n \frac{(mn-3)!}{(m-2)!\cdots(m-1)!\cdots m!} \\
            &=\frac{(mn-3)!}{m!^n}m(m-1)2\left(n-1\right).
\end{align*}\endgroup
        \item If $j=1$ and $j'>1$, then $H_{(i,i'),(j,j')}\left(B^{(2)}\right)$ equals
        \begingroup\makeatletter\def\f@size{8}\check@mathfonts
\def\maketag@@@#1{\hbox{\m@th\large\normalfont#1}}%
\begin{align*}
  & (1-n)\frac{(mn-3)!}{(m-2)!\cdots(m-1)!\cdots m!}+\sum\limits_{k=2}^n \frac{(|m|-3)!}{m!\cdots (m-1)!\cdots(m-1)!\cdots(m-1)!\cdots m_n!} \\
            &=(1-n)\frac{(mn-3)!}{m!^n}m^2(m-1)+\sum\limits_{k\geq 2,k\neq j'}\frac{(mn-3)!}{m!^n}m^3 +\frac{(mn-3)!}{m!^n}m^2(m-1) \\
            &=\frac{(mn-3)!}{m!^n}m^2 \left(n-2\right).
\end{align*}\endgroup
        \item If $j\neq j'$ and $j,j'> 1$, then $H_{(i,i'),(j,j')}\left(B^{(2)}\right)$ equals
        \begingroup\makeatletter\def\f@size{8}\check@mathfonts
\def\maketag@@@#1{\hbox{\m@th\large\normalfont#1}}%
\begin{align*}
  & (1-n)\frac{(mn-3)!}{m!\cdots(m-1)!\cdots(m-1)!\cdots(m-1)!\cdots m!} \\
            &\ +\sum\limits_{k=2}^n \frac{(mn-3)!}{m!\cdots(m-1)!\cdots(m-1)!\cdots(m-1)!\cdots m!} \\
            &=(1-n)\frac{(mn-3)!}{m!^n}m^3 +2\frac{(mn-3)!}{m!^n}m^2(m-1)+\sum\limits_{k\geq 2,k\neq j,j'}\frac{(mn-3)!}{m!^n}m^3\\
            &=\frac{(mn-3)!}{m!^n}m^2 \left(-2\right).
\end{align*}\endgroup
        \item If $j=j'$ and $j> 1$, then $H_{(i,i'),(j,j')}\left(B^{(2)}\right)$ equals
        \begingroup\makeatletter\def\f@size{8}\check@mathfonts
\def\maketag@@@#1{\hbox{\m@th\large\normalfont#1}}%
\begin{align*}
        & (1-n)\frac{(mn-3)!}{m!\cdots(m-2)!\cdots(m-1)!\cdots m!}+\sum\limits_{k=2}^n \frac{(mn-3)!}{m!\cdots(m-2)!\cdots(m-1)!\cdots m!} \\
            &=(1-n)\frac{(mn-3)!}{m!^n}m^2(m-1)+\frac{(mn-3)!}{m!^n}m(m-1)(m-2)  +\sum\limits_{k\geq 2,k\neq j}\frac{(mn-3)!}{m!^n}m^2(m-1)\\
            &=\frac{(mn-3)!}{m!^n}m(m-1)(-2). \qedhere
\end{align*}\endgroup
 \end{enumerate}
\end{proof}

\begin{proof}[Proof of Theorem \ref{thm:lowerbounds}(1)]
  In light of Lemma \ref{lem:off_diagonal} and Proposition \ref{prop:hessianperm1}, it is sufficient to show that the matrices $Q^{(1)}$, $R^{(1)}$, $Q^{(2)}$, and $R^{(2)}$ are nonsingular. The idea is to express each matrix in the form $aA + bD$ where $D$ is a diagonal matrix with only one or two distinct values on the diagonal and $A$ is a block matrix of rank one or rank two. This will make it easy to verify that zero is not an eigenvalue. 

 First consider the matrices $Q^{(2)}$ and $R^{(2)}$ that arise in the case $m_1 = \dots = m_n = m$. We may assume that $m \geq 2$ because $m=1$ simply yields the permanent, and also recall that by hypothesis $n \geq 3$. Now
  \[ Q^{(2)} = k_2(mn-2)\left(m^2U_{n,n}-mI_n\right) = k_2(mn-2)m\left(mU_{n,n}-I_n\right).\]
  The eigenvalues of $U_{n,n}$ are 0 and $n$, so $mU_{n,n}-I_n$ is nonsingular for all $m > 1$. Then since $k_2 > 0$ and $mn - 2 > 0$, $Q^{(2)}$ is nonsingular.

  The matrix obtained from $R^{(2)}$ by removing its first row and column is
  \[ R_0 = k_2 m \left( -2mU_{n,n}+2I_n \right) \]
  which is again nonsingular since $m > 1$. Suppose that $\vv = (v_1, \dots, v_n)^t$ is a vector such that $R^{(2)} \vv = \mathbf{0}$. Let $\theta_1 = v_1$ and $\theta_2 = v_2 + \dots + v_n$. By taking the first row alone and summing the remaining rows, we conclude that
  \[ \left(\begin{array}{cc}
    2(m-1)(n-1) & m(n-2) \\
    (n-1)m(n-2) & -2(m-1)-2m(n-2)
\end{array}\right)\left(\begin{array}{c}
    \theta_1 \\
    \theta_2
\end{array}\right)=\left(\begin{array}{c}
    0 \\
    0
  \end{array}\right).\]
  The determinant of this system is strictly negative because the lower-right entry is negative and the other three entries are all positive (using that $n > 2$ and $m >1$.) So $\theta_1 = \theta_2 = 0$. But since $\theta_1 = v_1$, we conclude that $R_0 \vv' = 0$, where $\vv' = (v_2, \dots, v_n)^t$. Since $R_0$ is nonsingular, $\vv' = \mathbf{0}$, so $\vv = \mathbf{0}$. 

  Now consider the matrices $Q^{(1)}$ and $R^{(1)}$ that arise in the case that the components of $\mm$ are not all equal. By scaling the $i$th column of $Q^{(1)}$ by $\frac{1}{m_i}$ for each $i$, we obtain the matrix $A - I$, where $A$ is the rank-one matrix whose rows are all equal to the vector $(m_1, \dots, m_n)$. The eigenvalues of $A$ are $n$ (once) and 0 ($n-1$ times), and since $n > 1$ we conclude that $A-I$ and therefore $Q^{(1)}$ are nonsingular.

  For $R^{(1)}$, we begin by scaling the $ith$ row by $1/m_i$ for each $i$. It is sufficient to show that the resulting matrix $S$ is nonsingular, and we can express $S$ as 
  \[ \begin{pmatrix} (-2d+2) I_c & 0 \\ 0 & 2I_{n-c} \end{pmatrix} +
  \begin{pmatrix} 2d-2 & d-2 \\
    \vdots & \vdots \\
    2d-2 & d-2 \\
    d-2 & -2 \\
    \vdots & \vdots \\
    d-2 & -2
  \end{pmatrix}
  \begin{pmatrix}
    m_1 & \cdots & m_c & 0 & \cdots & 0 \\
    0 & \cdots & 0 & m_{c+1} & \cdots & m_n
  \end{pmatrix}. \] 
  If $S\xx = \zerovec$, then
  \begin{align*} \zerovec & = \begin{pmatrix} (-2d+2)x_1 \\ \vdots \\ (-2d+2)x_c \\ 2x_{c+1} \\ \vdots \\ 2x_n \end{pmatrix} +
    \begin{pmatrix} 2d-2 & d-2 \\
    \vdots & \vdots \\
    2d-2 & d-2 \\
    d-2 & -2 \\
    \vdots & \vdots \\
    d-2 & -2
  \end{pmatrix}
    \begin{pmatrix} \sum_{i=1}^c m_ix_i \\ \sum_{i=c+1}^n m_ix_i \end{pmatrix} \\
    & =
    \begin{pmatrix} (-2d+2)x_1 + (2d-2) \sum_{i=1}^c m_ix_i + (d-2) \sum_{i=c+1}^n m_ix_i \\
      \vdots \\
       (-2d+2)x_c + (2d-2) \sum_{i=1}^c m_ix_i + (d-2) \sum_{i=c+1}^n m_ix_i \\
      2x_{c+1} + (d-2) \sum_{i=1}^c m_ix_i -2 \sum_{i=c+1}^n m_ix_i \\
      \vdots \\
      2x_n + (d-2) \sum_{i=1}^c m_ix_i -2 \sum_{i=c+1}^n m_ix_i \end{pmatrix}  
  \end{align*}

  From the first $c$ coordinates all being equal to zero and hence to each other, we conclude that $x_1 = \dots = x_c =: y$, and all but one of the first $c$ equations is redundant. Similarly, from the remaining coordinates we conclude that $x_{c+1} = \dots = x_n =: z$ and all but one of the remaining equations is redundant. We thus obtain a $2 \times 2$ system
  $\left(\begin{array}{cc}
    a_{11} & a_{12} \\
    a_{12} & a_{22} 
    \end{array}\right)
  \left(\begin{array}{c}
    y \\
    z
\end{array}\right)=\left(\begin{array}{c}
    0 \\
    0
  \end{array}\right)$, where
  \[ \left(\begin{array}{cc}
    a_{11} & a_{12} \\
    a_{12} & a_{22} 
    \end{array}\right) = \left(\begin{array}{cc}
    (2d-2)\left( -1 + \sum_{i=1}^c m_i \right) & (d-2) \left( \sum_{i=c+1}^n m_i \right) \\
    (d-2) \left( \sum_{i=1}^c m_i \right) & -2 \left( -1 + \sum_{i=c+1}^n m_i \right)
    \end{array} \right). \]
    
%    \end{array}\right) (2d-2)\left( -1 + \sum_{i=1}^c m_i \right), a_{12}=(d-2) \left( \sum_{i=c+1}^n m_i \right),
%  a_{21}=(d-2) \left( \sum_{i=1}^c m_i \right), -2 \left( -1 + \sum_{i=c+1}^n m_i \right).\]

%  in the variables $y$ and $z$
%  \[ \begin{cases} (2d-2)\left( -1 + \sum_{i=1}^c m_i \right) y + (d-2) \left( \sum_{i=c+1}^n m_i \right) z = 0 \\
%     (d-2) \left( \sum_{i=1}^c m_i \right) y -2 \left( -1 + \sum_{i=c+1}^n m_i \right) z = 0.
%  \end{cases}.\]

  By definition, we have $d \geq 1$ and each $m_i > 0$, so $a_{11} > 0$. Since $n \geq c+1$ we can conlude that $\sum_{i=c+1}^n m_i \geq 1$, so $a_{22} \leq 0$. Now $a_{12}$ and $a_{21}$ have the same sign: +, -, or 0, depending on whether $d > 2$, $d < 2$, or $d=2$. So the determinant of the coefficient matrix is less than or equal to zero. For it to be zero, we would need to have $n=c+1$, $m_{c+1} = 1$ and also $d=2$. But this is not possible: we know that $m_1 > 1$, so if $n=c+1$ and $m_{c+1} = 1$, then
 \[ d=\frac{\gamma}{c m_1} = \frac{cm_1 + 1}{cm_1} < 2.\]
We conclude that $S$ and thus also $R^{(1)}$ are nonsingular.
\end{proof}
  
% \section{Conclusions and Questions}
% \begin{question} Can we pick zeros for certain multipermanents that give an even better bound, say if many (even all) of the components of $\mm$ are different?
%   \end{question}
% FILL IN: more questions and ideas

\bibliography{poset_polynomials}
\bibliographystyle{amsalpha}

\end{document}